\documentclass[11pt]{smfart}

\usepackage[all]{xy}
\usepackage{amscd,amssymb,amsfonts,amsmath}
\usepackage[francais]{babel}
\usepackage{smfthm}
\usepackage{graphics}
\usepackage{epsfig}

\unitlength=1cm

\newcommand\mypagesizel{
\textwidth= 6.2in
\textheight=9in
\voffset-.65in
\hoffset -0.47in
\marginparwidth=56pt
}


\newcommand{\Pic}{\textup{Pic}}
\newcommand{\Spec}{\textup{Spec}}

\newcommand{\NS}{\textup{NS}}

\renewcommand{\H}{\textup{H}}

\newcommand{\cad}{c'est-\`a-dire}

\renewcommand{\phi}{\varphi}

\newcommand{\lra}{\longrightarrow}

\newcommand{\Fabs}{\textit{F}_{\textup{abs}}}
\newcommand{\Fabsbar}{{\bar F}_{\textup{abs}}}
\newcommand{\Fabsv}{\textit{F}_{\textup{abs,\textit{v}}}}
\newcommand{\Fcris}{\textit{F}_{\textup{cris}}}
\newcommand{\Fcrisbar}{{\bar F}_{\textup{cris}}}

\renewcommand{\le}{\leqslant}
\renewcommand{\ge}{\geqslant}

\newcommand{\bC}{\textup{\textbf{C}}}

\newcommand{\bF}{\textbf{F}}

\newcommand{\bQ}{\textup{\textbf{Q}}}
\newcommand{\bR}{\textup{\textbf{R}}}

\newcommand{\bZ}{\textup{\textbf{Z}}}

\newcommand{\cF}{\mathcal{F}}

\newcommand{\cH}{\mathcal{H}}

\newcommand{\cO}{\mathcal{O}}

\newcommand{\cS}{\mathcal{S}}

\newcommand{\cV}{\mathcal{V}}

\newcommand{\cX}{\mathcal{X}}

\newcommand{\Id}{\textup{Id}}
\newcommand{\Gal}{\textup{Gal}}
\newcommand{\Gl}{\textup{GL}}
\newcommand{\rang}{\textup{rang}}
\newcommand{\Tr}{\textup{Tr}}
\newcommand{\End}{\textup{End}}

\newcommand{\Pet}{\textup{P}_{\textup{\'et}}}
\newcommand{\Ret}{\textup{R}_{\textup{\'et}}}
\newcommand{\Het}{\textup{H}_{\textup{\'et}}}

\newcommand{\Pb}{\textup{P}_{\textup{B}}}
\newcommand{\Rb}{\textup{R}_{\textup{B}}}
\newcommand{\Hb}{\textup{H}_{\textup{B}}}

\newcommand{\Hcris}{\textup{H}_{\textup{cris}}}

\newcommand{\Hdr}{\textup{H}_{\textup{dR}}}

\renewcommand{\ker}{\textup{Ker}}

\mypagesizel

\begin{document}

\title[R\'eductions des vari\'et\'es symplectiques irr\'eductibles]{Invariants de Hasse-Witt des r\'eductions de
certaines vari\'et\'es symplectiques irr\'eductibles}

\author{St\'ephane \textsc{Druel}}

\address{Institut Fourier\\ UMR 5582
du CNRS\\ Universit\'e Grenoble 1, BP 74\\ 38402 Saint Martin d'H\`eres,
France.}

\email{druel@ujf-grenoble.fr}

\urladdr{http://www-fourier.ujf-grenoble.fr/~druel/}

\maketitle

\section{Introduction}

On d\'efinit \textit{l'invariant de Hasse-Witt} d'une vari\'et\'e alg\'ebrique $X$ projective et lisse sur 
un corps parfait de caract\'eristique
$>0$
comme le rang stable du Frobenius absolu agissant
sur $\H^{\dim(X)}(X,\cO_X)$.

Soit $X$ une vari\'et\'e d\'efinie sur $\bZ$ avec $\det(\Omega_X^1)\simeq\cO_X$.
On s'int\'eresse ici aux invariants de Hasse-Witt des vari\'et\'es obtenues par r\'eduction de
$X$ modulo un nombre premier.

On sait, d'apr\`es le th\'eor\`eme de d\'ecomposition de Bogomolov (voir \cite[Th\'eor\`eme 1]{beauville83}),
que toute vari\'et\'e (lisse) complexe compacte k\"ahl\'erienne
avec $c_1(X)=0\in\Hdr(X,\bC)$
est \`a un rev\^etement \'etale fini et \`a un isomorphisme pr\`es un
produit 
$$T\times \prod_{i\in I} Y_i\times\prod_{j\in J} Z_j$$
o\`u 
\begin{enumerate}
\item[$\bullet$]$T$ est un tore complexe,
\item[$\bullet$]$Y_i$ est une vari\'et\'e de \textit{Calabi-Yau}, c'est-\`a-dire, $Y_i$
est projective, simplement connexe (de dimension $\ge 3$) et
$\H^0(Y_i,\Omega_{Y_i}^{\bullet})=\bC\oplus\bC\omega_i$ o\`u
$\omega_i$ est
une forme diff\'erentielle  de degr\'e $\dim(Y_i)$
partout non nulle,
\item[$\bullet$]$Z_j$ est \textit{symplectique irr\'eductible},
c'est-\`a-dire, $Z_j$
est simplement connexe et $\H^0(Z_j,\Omega_{Z_j}^{\bullet})=\bC[\Omega_j]$ o\`u 
$\Omega_j$ est
une $2$-forme partout non d\'eg\'en\'er\'ee.
\end{enumerate}

On a tr\`es peu d'exemples de vari\'et\'es symplectiques irr\'eductibles. Beauville a constuit deux familles de
vari\'et\'es symplectiques (irr\'eductibles) de dimension $2m$ (pour tout entier $m \ge 1$)~: l'une est le sch\'ema de
Hilbert \og des points\fg~$S^{[m]}$ d'une surface $K3$ $S$ (\cite[Th\'eor\`eme 3]{beauville83}) et l'autre est la
varit\'et\'e de Kummer g\'en\'eralis\'ee $K_m(A)$ d'une surface ab\'elienne $A$ qui par d\'efinition est la fibre
au-dessus de $0$ du morphisme d'Albanese de $A^{[m+1]}$ (\cite[Th\'eor\`eme 4]{beauville83}). O'Grady a construit
deux autres familles d'exemples~: l'une une famille de vari\'et\'es de dimension 10 (\cite{ogrady99}) et deuxi\`eme
nombre de Betti \'egal \`a 24 (\cite{rapagnetta08})
et l'autre une famille de vari\'et\'es de dimension 6 et deuxi\`eme nombre de Betti \'egal \`a 8 (\cite{ogrady03}).

On sait peu de choses des invariants de Hasse-Witt des r\'eductions modulo un
nombre premier des vari\'et\'es ab\'eliennes et des vari\'et\'es de Calabi-Yau.
On sait toutefois, qu'\'etant donn\'e une courbe elliptique $E$ sur $\bQ$, 
ses r\'eductions modulo un nombre premier ont un invariant de Hasse-Witt nul pour une infinit\'e de nombres premiers
(\cite{elkies87}) et
on montre facilement que la surface de Kummer associ\'ee \`a $E\times E$ a la m\^eme propri\'et\'e. 

\medskip

On obtient
le r\'esultat suivant.

\begin{theo}\label{theo:principal}
Soient $K$ un corps de nombres et $\cO_K$ l'anneau des entiers de $K$. On note $\bar K$ une cl\^oture alg\'ebrique
de $K$ et on fixe un nombre premier $\ell$.
Soit $X$ une vari\'et\'e symplectique irr\'eductible polarisable sur $K$. Soient
$f : \cX \to \Spec(\cO_K)$ un mod\`ele entier de $X$ et
$\cV$ un ensemble fini de places non archim\'ediennes de $K$ tel que $f$ soit lisse au-dessus de l'ouvert
$\Spec(\cO_K[\cV^{-1}])$ de $\Spec(\cO_K)$.
On suppose
ou bien $\textup{rang}(\NS(X\otimes\bar K)) \ge 2$
ou bien $\dim_{\bQ_\ell}(\Het^2(X\otimes\bar K,\bQ_\ell))$ pair.
Il existe alors un ensemble $\Sigma$ de places finies de $K$ de densit\'e $>0$
tel que pour tout
$v\in\Sigma\setminus\cV$ l'invariant de Hasse-Witt de $\cX_v$ soit non nul. On peut supposer la densit\'e de
$\Sigma$  \'egale
\`a 1 quitte \`a remplacer $K$ par une extension finie $L$ de $K$
convenable.
\end{theo}

Ogus a d\'emontr\'e un r\'esultat analogue pour une surface ab\'elienne quelconque (\cite[Expos\'e
VI]{deligneLN900}).
Joshi et Rajan (\cite{joshi01}) puis Bogomolov et Zahrin (\cite{bogomolovzahrin}) ont d\'emontr\'e l'\'enonc\'e
ci-dessus lorsque $\dim(X)=2$, autrement dit lorsque $X$ est une surface $K3$, auquel cas $b_2(X)=22$. On utilise ici
les m\^emes outils. 

Soient $X$ une vari\'et\'e symplectique irr\'eductible 
d\'efinie sur un corps premier $\bF_p$ de caract\'eristique $p$ et 
$\Omega\in\textup{H}^0(X,\Omega_X^2)\setminus \{0\}$ ($\Omega$ est par d\'efinition ferm\'ee).
On montre facilement que l'invariant de Hasse-Witt
de $X$ est non nul si et seulement si
$C(\Omega)\neq 0$ o\`u $C$ d\'esigne l'op\'eration de Cartier. 
On fixe un nombre premier $\ell\neq p$.
On montre ensuite, \`a peu de choses pr\`es, que si la valuation $p$-adique 
de l'une des valeurs propres du \og Frobenius g\'eom\'etrique\fg~agissant sur
$\Het^2(X\otimes\bar \bF_p,\bQ_\ell)$ est nulle alors $C(\Omega)\neq 0$ et l'invariant  de Hasse-Witt de
$X$ est donc non nul. On conclut avec un
\'enonc\'e du type th\'eor\`eme de densit\'e de \v{C}ebotarev d\^u \`a Serre.

\medskip

On d\'emontre au passage le r\'esultat suivant (voir \'egalement \cite{andre96}[Theorem 1.6.1] pour un r\'esultat
analogue dans le cas \og global\fg).

\begin{theo}
Soit $X$ une vari\'et\'e symplectique irr\'eductible polarisable d\'efinie sur un corps de nombres $K$.
On note $\bar K$ une
cl\^oture alg\'ebrique
de $K$. On fixe un nombre premier $\ell$ et on consid\`ere la repr\'esentation 
$\rho : \Gal(\bar K/K) \lra \Gl(\Het^2(X\otimes \bar K,\bQ_\ell))$.
On suppose ou bien $\textup{rang}(\NS(X\otimes\bar K)) \ge 2$
ou bien $\dim_{\bQ_\ell}(\Het^2(X\otimes\bar K,\bQ_\ell))$ pair.
Il existe alors un ensemble fini $\cV$ de places ultram\'etriques de $K$ tel que, si $v$ est une place finie de
$K$ et $v\not\in \cV$ alors $\rho$ est non ramifi\'ee en $v$ et l'\'el\'ement de Frobenius en $v$
$F_{v,\rho}\in\Gl(\Het^2(X\otimes \bar K,\bQ_\ell))$ est semi-simple.
\end{theo}

\begin{enonce*}[remark]{Remerciements}Je remercie D. Huybrechts de m'avoir indiqu\'e que \cite{andre96} contient une
d\'emonstration de l'\'enonc\'e de la conjecture de Tate pour les vari\'et\'es simplectiques irr\'eductibles sur les
corps de nombres sous la seule hypoth\`ese $b_2 \ge 4$ alors que dans une pr\'ec\'edente version de ce texte j'en
donnais une d\'emonstration (incompl\`ete par ailleurs) dans le cas particulier (auquel Andr\'e se ram\`eme) o\`u le
rang du groupe de
N\'eron-Severi est $\ge 2$.
\end{enonce*}

\section{Notations et rappels}

\subsection{}On se donne un corps $k$ et on fixe une cl\^oture alg\'ebrique $\bar k$ de $k$. 
On appelle \textit{vari\'et\'e (alg\'ebrique) sur $k$} ou \textit{$k$-vari\'et\'e (alg\'ebrique)} un sch\'ema de type
fini sur $k$. Soit $X$ une $k$-vari\'et\'e.
On notera $X\otimes\bar k$ le produit de
$X$ et $\Spec(\bar k)$ au-dessus de $\Spec(k)$.
\begin{defi}On dit qu'une $k$-vari\'et\'e $X$ lisse 
est \textit{symplectique} 
si $X$ est g\'eom\'etriquement int\`egre et 
s'il existe une $2$-forme ferm\'ee
$\bar\Omega\in\H^0(X\otimes \bar k,\Omega_{X\otimes \bar k}^2)$ qui est non d\'eg\'en\'er\'ee en tout point.
On dit qu'une vari\'et\'e symplectique $X$ propre sur $k$ est symplectique irr\'eductible si 
$\dim_{\bar k}(\H^1(X\otimes \bar k,\cO_{X\otimes \bar k}))=0$ et
$\dim_{\bar k}(\H^0(X\otimes \bar k,\Omega_{X\otimes \bar k}^2))=1$ ou, ce qui revient au m\^eme,
si $\dim_{k}(\H^1(X,\cO_X))=0$ et
$\dim_{k}(\H^0(X,\Omega_X^2))=1$.
\end{defi}

\begin{rema}Soit $X$ une vari\'et\'e propre et lisse sur $k$. On suppose $X$ symplectique (en particulier g\'eom\'etriquement int\`egre).
On sait bien qu'on a 
$$\H^0(X\otimes \bar k,\Omega_{X\otimes\bar k}^2)\simeq \H^0(X,\Omega_{X}^2)\otimes_k\bar k$$
et
$$\H^0(X\otimes \bar k,Z\Omega_{X\otimes\bar k}^2)\simeq \H^0(X,Z\Omega_{X}^2)\otimes_k\bar k$$
o\`u l'on a pos\'e $Z\Omega_X^2=\ker(d_X : \Omega_X^2 \to \Omega_X^{3})$.
On en d\'eduit que l'application polynomiale 
$\displaystyle{\H^0(X,\Omega_{X}^2)\ni\Omega \mapsto \Omega^{\wedge m}
\in \H^0(X,\cO_X)\simeq k}$ o\`u $2m=\dim(X)$ n'est pas identiquement nulle puisqu'elle ne l'est pas apr\`es extension
des scalaires \`a $\bar k$ et que sa rectriction \`a 
$\H^0(X,Z\Omega_{X}^2)$ ne l'est pas non plus.
Il existe donc une $2$-forme ferm\'ee 
$\Omega\in\H^0(X,\Omega_{X}^2)$ non
d\'eg\'en\'er\'ee en tout point.
\end{rema}

\subsection{Th\'eories cohomologiques}On se donne un corps $k$ 
de caract\'eristique $p \ge 0$,
une vari\'et\'e $X$ propre sur $k$ et
on fixe une
cl\^oture alg\'ebrique $\bar k$ de $k$.

On suppose $k=\bC$ et on se donne un anneau de coefficients $\Lambda$. On note alors $\Hb^{\bullet}(X,\Lambda)$ la
cohomologie singuli\`ere de l'espace analytique complexe $X(\bC)$ \`a coefficients dans $\Lambda$.

On se donne un entier premier $\ell\neq p$. On note 
$\Het^{\bullet}(X,\bZ_\ell):=\underset{\longleftarrow}{\lim}\,\Het^{\bullet}(X,\bZ/\ell^n\bZ)$ et 
$\Het^{\bullet}(X,\bQ_\ell):=\Het^{\bullet}(X,\bZ_\ell)\otimes_{\bZ_{\ell}}\bQ_{\ell}$ les groupes de cohomologie
$\ell$-adique \`a valeurs dans $\bZ_\ell$ et $\bQ_\ell$ respectivement.

On suppose $k$ parfait et $p>0$. On note $W=W(k)$ l'anneau des vecteurs de Witt de
$k$, $K_0$ son corps des fractions et $\Hcris^{\bullet}(X/W)$ la cohomologie cristalline de $X$. 

On sait que lorsque $X$ est projective et lisse sur $k$ (comme ci-dessus), $\Hb^{\bullet}(X,\bQ)$,
$\Het^{\bullet}(X\otimes\bar k,\bQ_\ell)$ et
$\Hcris^{\bullet}(X/W)\otimes_W K_0$ sont des alg\`ebres anti-commutatives gradu\'ees de dimensions finies sur  
$\bQ$, $\bQ_{\ell}$ et $K_0$ respectivement et qu'on a une formule de K\"unneth, une dualit\'e de Poincar\'e, des
th\'eor\`emes de Lefschetz faible et fort et enfin des applications cycle.

On a \'egalement des structures suppl\'ementaires sur chacun de ces groupes de cohomologie. 
On suppose $k$ fini de cardinal $q=p^a$ ($a>0$).
On note $\Fabs : X \to X$ le Frobenius absolu. Il agit sur par fonctorialit\'e sur $\Hcris^{\bullet}(X/W)$~; 
$\Fabs^a$ agit lin\'eairement sur $\Hcris^{\bullet}(X/W)$ et $\Hcris^{\bullet}(X/W)\otimes_W K_0$.
On notera $K_0(r)$ pour $r\in\bZ$ le $K_0$ espace vectoriel de dimension $1$ sur $K_0$ sur lequel $\Fabs^a$ agit par
multiplication par $q^r$ et pour tout $K_0[\Fabs^a]$-module $V$, on note
$V(r):=V\otimes_{K_0} K_0(r)$.

On suppose $k$ quelconque (et $\ell\neq p$)~; le groupe de Galois absolu $\Gal(\bar k/k)$ agit sur 
$\Het^{\bullet}(X \otimes \bar k,\bZ_\ell)$
et $\Het^{\bullet}(X\otimes \bar k,\bQ_\ell)$ par transport de structures. On consid\`ere les $\Gal(\bar k/k)$-modules
$\bZ/\ell^n\bZ(1):=\mu_{\ell^n}(\bar k)$,
$\bZ_\ell(1):=\underset{\longleftarrow}{\lim}\,\bZ/\ell^n\bZ(1)$ et
$\bQ_{\ell}(1):=\bZ_\ell(1)\otimes_{\bZ_\ell} \bQ_\ell$ (le module de Tate $\ell$-adique).
On note aussi, pour $r \ge 1$, 
$\bZ_{\ell}(r):=\bZ_{\ell}(1)^{\otimes r}$ (resp.
$\bQ_{\ell}(r):=\bQ_{\ell}(1)^{\otimes r}$), et enfin $V(r):=V\otimes_{\bZ_{\ell}} \bZ_{\ell}(r)$
(resp. $V(r):=V\otimes_{\bQ_{\ell}} \bQ_{\ell}(r)$) pour tout $\bZ_\ell[\Gal(\bar k/k)]$-module 
(resp. $\bQ_\ell[\Gal(\bar k/k)]$-module) $V$.

On suppose $k=\bC$. On rappelle que les groupes $\Hb^{m}(X,\bZ)$ ($m\ge 0$) portent une structure de Hodge
enti\`ere pure de poids $m$. On note, pour $r \ge 0$,  
$\bZ(r)$ la structure de hodge enti\`ere de poids $2r$, de type ${(-r,-r)}$ avec $\bZ(r)_{\bC}=\bC$
et $\bZ(r)_{\bZ}=2i\pi\bZ\subset \bC$. On note aussi $\Hb^{m}(X,\bC)(r)$ la structure de Hodge
$\Hb^{m}(X,\bC)\otimes_\bZ \bZ(r)$ correspondante. 
On note enfin $\Hb^{m}(X,\bQ)(r):=\Hb^{m}(X,(2i\pi)^r\bQ)\subset \Hb^{m}(X,\bC)$ ($r\ge 0$).
On rappelle aussi que, par le th\'eor\`eme de comparaison
(\cite[Expos\'e XI]{sga4}), on a un isomorphisme $\Hb^{m}(X,\bQ_\ell)\simeq \Het(X,\bQ_\ell)$, qui induit, via
l'exponentielle, des isomorphismes $\Hb^{m}(X,\bQ_\ell)(r)\simeq \Het(X,\bQ_\ell)(r)$ (voir
\cite[I.1]{deligneLN900}).

On suppose $k$ quelconque. On note $\Hdr(X,k)$ la cohomologie de de Rham de $X$ sur $k$, c'est-\`a-dire
l'hypercohomologie (pour la topologie de Zariski) du complexe de de Rham $\Omega_X^{\bullet}$.

On note enfin $\H^{\bullet}(X,\cF)$ la cohomolgie du faisceau de groupes ab\'eliens $\cF$ pour la topologie de Zariski.

\subsection{Invariant de Hasse-Witt}Soient $k$ un corps parfait de caract\'eristique $p>0$ et 
$\sigma$ l'automorphisme de Frobenius de $k$. Soient
$V$ un espace vectoriel de dimension finie sur $k$ et $\varphi$ un endomorphsime $\sigma$-lin\'eaire de $V$. On a une
d\'ecomposition en somme directe 
$V=V_{ss}\oplus V_{nil}$ o\`u $V_{ss}=\cap_{i\ge 0} \varphi^i(V)$ et $V_{nil}=\cup_{i\ge 0}\ker(\varphi^i)$~; 
les espaces vectoriels $V_{ss}$ et $V_{nil}$ sont $F$-stables,
la
restriction de $F$ \`a $V_{ss}$ est bijective et sa restriction \`a $V_{nil}$ est nilpotente. On appelle \textit{rang
stable} de $\varphi$ la dimension de $V_{ss}$ sur $k$. On remarque que le rang stable de $V$ sur $k$ est \'egal au rang
stable de $V\otimes\bar k$ sur $\bar k$.

Soit $X$ une vari\'et\'e alg\'ebrique de dimension $d$ propre sur $k$. On note $\Fabs : X \to X$ le
morphisme de Frobenius absolu. On appelle \textit{invariant de Hasse-Witt} de $X$ (sur $k$) le rang stable de $\Fabs$
agissant sur
$\H^d(X,\cO_X)$. On note $\bar\sigma$ l'automorphisme de Frobenius de $\bar k$ et 
$\Fabsbar : X\otimes\bar k \to X\otimes k$ le Frobenius absolu.
On a bien s\^ur un isomophisme $\H^d(X\otimes\bar k,\cO_{X\otimes\bar k})\simeq\H^d(X,\cO_X)\otimes_k\bar k$ tel que
$\Fabsbar=\Fabs\otimes \bar k$, et l'invariant de Hasse-Witt de $X$ (sur $k$) est donc \'egal \`a celui de
$X\otimes\bar k$ (sur $\bar k)$.

\section{Quelques pr\'eliminaires}

On consid\`ere dans tout ce paragraphe un corps parfait $k$ de caract\'eristique $p>0$.

\subsection{Op\'eration de Cartier}
Soit $X$ une vari\'et\'e alg\'ebrique lisse sur $k$ de
dimension $\dim(X)=d$.
On note $\sigma$ l'automorphisme de Frobenius de $k$ et $\Fabs : X \to X$ le morphisme de Frobenius absolu.
On rappelle
qu'il existe un unique isomorphisme $\sigma$-lin\'eaire d'alg\`ebres gradu\'ees 
$$C^{-1}:\bigoplus \Omega_X^i \lra \bigoplus \cH^i(\Omega_X^{\bullet})$$
tel que $C^{-1}(s)=s^p$ si $s\in\cO_X$ et $C^{-1}(d_Xs)$ soit la classe de $s^{p-1}d_Xs$ ($\cH^i(\Omega_X^{\bullet})$
d\'esigne la cohomologie en degr\'e $i$ du complexe $\Omega_X^{\bullet}$)~; $C$ est appel\'e l'op\'erateur de
Cartier. 

On note $B\Omega_X^i=d_X\Omega_X^{i-1}$ et $Z\Omega_X^i=\ker(d_X : \Omega_X^i \to \Omega_X^{i+1})$ pour tout
$i\in\{0,\ldots,\dim(X)\}$~; ce sont des faisceaux de groupes ab\'eliens. 
On note encore $C$ l'application induite par l'op\'erateur de Cartier
$$Z\Omega_X^i \to Z\Omega_X^i/B\Omega_X^i=\cH^i(\Omega_X^{\bullet}) \to \Omega_X^i.$$

On rappelle enfin
que l'application $k$-lin\'eaire
$${\Fabs}_*Z\Omega_X^{d}={\Fabs}_*\omega_X={\Fabs}_*\Fabs^{!}\omega_X\to \omega_X$$
est une \og application trace\fg~dans la dualit\'e relative au morphisme (fini) $\Fabs$.

\begin{lemm}\label{lemm:hassewittversusscindage}
Soit  $X$ une vari\'et\'e alg\'ebrique de dimension $d=2m \ge 2$ propre et lisse sur 
$k$.
On suppose 
qu'il existe une $2$-forme ferm\'ee $\Omega$ telle qu'on ait $\textup{H}^0(X,\Omega_X^2)=k\cdot\Omega$ et 
$\Omega^{\wedge m}\neq 0$.
\begin{enumerate}
\item Si $C(\Omega)\neq 0$ alors
l'invariant de
Hasse-Witt de $X$ est non nul.
\item On suppose de plus  $\H^0(X,\omega_X)=
k\cdot\Omega^{\wedge m}$. 
On a alors $C(\Omega)\neq 0$ si et seulement si l'invariant de Hasse-Witt de $X$ est non nul.
\end{enumerate}
\end{lemm}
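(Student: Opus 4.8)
The plan is to re-express the Hasse–Witt invariant, defined via the $\sigma$-linear Frobenius $\Fabs$ acting on $\H^d(X,\cO_X)$, as the stable rank of the Cartier operator $C$ acting on $\H^0(X,\omega_X)$, and then to exploit the hypotheses $\H^0(X,\Omega_X^2)=k\cdot\Omega$ and (for the second part) $\H^0(X,\omega_X)=k\cdot\Omega^{\wedge m}$ through the multiplicativity of $C$ on closed forms.

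First I would set up the duality. Since $\Fabs$ is finite, Grothendieck duality applied to $\Fabs$ identifies the trace map ${\Fabs}_*\omega_X\to\omega_X$ recalled above with the Serre transpose of the Frobenius $\cO_X\to{\Fabs}_*\cO_X$, $s\mapsto s^p$. Passing to cohomology and writing $\langle\cdot,\cdot\rangle\colon\H^d(X,\cO_X)\times\H^0(X,\omega_X)\to k$ for the perfect Serre pairing, this gives the semilinear adjunction
$$\langle\Fabs(v),w\rangle=\langle v,C(w)\rangle^p\qquad (v\in\H^d(X,\cO_X),\ w\in\H^0(X,\omega_X)),$$
which I would check is consistent with $\Fabs$ being $\sigma$-linear and $C$ being $\sigma^{-1}$-linear. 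Iterating yields $\langle\Fabs^n(v),w\rangle=\langle v,C^n(w)\rangle^{p^n}$, hence $\ker(C^n)=(\Fabs^n\H^d(X,\cO_X))^{\perp}$. Using the rank–nullity identity, valid for additive semilinear maps, together with $\dim_k\H^d(X,\cO_X)=\dim_k\H^0(X,\omega_X)$, I would conclude $\dim_k\ker(C^n)=\dim_k\ker(\Fabs^n)$ for all $n$, so that $C$ on $\H^0(X,\omega_X)$ and $\Fabs$ on $\H^d(X,\cO_X)$ have the same stable rank. In particular the Hasse–Witt invariant of $X$ is the stable rank of $C$ acting on $\H^0(X,\omega_X)$.

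The remaining step is elementary. As $C$ is multiplicative on closed forms (its inverse $C^{-1}$ being an isomorphism of graded algebras) and $\Omega,\Omega^{\wedge m}$ are closed, one has $C(\Omega^{\wedge m})=C(\Omega)^{\wedge m}$; moreover $C(\Omega)\in\H^0(X,\Omega_X^2)=k\cdot\Omega$, say $C(\Omega)=\lambda\Omega$ with $\lambda\in k$. For (1), if $C(\Omega)\neq 0$ then $\lambda\neq 0$ and $C(\Omega^{\wedge m})=\lambda^m\Omega^{\wedge m}\neq 0$, so $C$ is bijective on the line $k\cdot\Omega^{\wedge m}$; this line lies in $\cap_i C^i\H^0(X,\omega_X)$, the stable rank of $C$ is $\geq 1$, and the Hasse–Witt invariant is nonzero. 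For the converse in (2), the extra assumption makes $\H^0(X,\omega_X)=k\cdot\Omega^{\wedge m}$, hence $\H^d(X,\cO_X)$, one-dimensional; if the Hasse–Witt invariant is nonzero then the stable rank of $C$ equals $1$, i.e. $C$ is bijective on $\H^0(X,\omega_X)$, whence $0\neq C(\Omega^{\wedge m})=C(\Omega)^{\wedge m}$ and therefore $C(\Omega)\neq 0$.

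The main obstacle is the adjunction of the second paragraph: identifying the Serre transpose of the $\sigma$-linear Frobenius on $\H^d(X,\cO_X)$ with the Cartier operator on $\H^0(X,\omega_X)$, and tracking the Frobenius twists so as to obtain the precise relation $\langle\Fabs(v),w\rangle=\langle v,C(w)\rangle^p$. Once this identification and the ensuing equality of stable ranks for semilinear adjoints are secured, the two parts follow from the single multiplicative identity $C(\Omega^{\wedge m})=C(\Omega)^{\wedge m}$ together with the one-dimensionality hypotheses.
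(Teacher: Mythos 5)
Your proof is correct and follows essentially the same route as the paper: Grothendieck--Serre duality identifying the Cartier/trace map on $\H^0(X,\omega_X)$ as the adjoint of the Hasse--Witt map on $\H^d(X,\cO_X)$, followed by the multiplicative identity $C(\Omega^{\wedge m})=C(\Omega)^{\wedge m}=\lambda^m\Omega^{\wedge m}$ on the line $k\cdot\Omega$. Your bookkeeping via equality of stable ranks of the two semilinear adjoints is in fact slightly more careful than the paper's terse ``dual'' assertion, and it is exactly what makes part (1) go through without assuming $\H^0(X,\omega_X)$ one-dimensional.
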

\begin{proof}
On sait que l'application
$$ \H^0(X,\omega_X)\simeq\H^0(X,{\Fabs}_*\Fabs^{!}\omega_X)\to \H^0(X,\omega_X)$$
induite par \og l'application trace\fg~est
duale
de l'application de Hasse-Witt
$$F_{\textup{abs}}^* : \H^{d}(X,\cO_X) \lra \H^{d}(X,\cO_X).$$
On en d\'eduit en particulier que l'invariant de Hasse-Witt de $X$ est non nul si et seulement s'il existe
une forme $\omega$ sur $X$ de degr\'e $d$
telle que $C(\omega)\neq 0$.

On a $C(\Omega)=\lambda\Omega$, pour un scalaire $\lambda\in k$ convenable. On a donc
$$C(\underbrace{\Omega\wedge\cdots \wedge\Omega}_{\textup{m facteurs}})=\underbrace{C(\Omega)\wedge\cdots\wedge
C(\Omega)}_{\textup{m facteurs}}=\lambda^m \Omega.$$
On conclut maintenant facilement.
\end{proof}

On montre avec des arguments analogues le r\'esultat suivant.

\begin{lemm}
Soit  $X$ une vari\'et\'e alg\'ebrique symplectique propre sur $k$, de dimension $d=2m$. On suppose 
que toutes les $2$-formes sur $X$ sont ferm\'ees.
Alors l'invariant de Hasse-Witt de $X$ est non nul
si et seulement si l'application 
$\displaystyle{\H^0(X,{\Fabs}_*Z\Omega_X^2) \lra \H^0(X,\Omega_X^2)}$
induite par l'op\'erateur de Cartier est surjective.
\end{lemm}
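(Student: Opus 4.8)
Le plan est de reprendre la dualité trace/Frobenius de la démonstration précédente, puis d'exploiter la structure symplectique et la multiplicativité de l'opérateur de Cartier. Comme $X$ est symplectique, la remarque du §2 fournit une $2$-forme fermée $\Omega\in\H^0(X,\Omega_X^2)$ partout non dégénérée ; alors $\Omega^{\wedge m}$ trivialise $\omega_X$, et comme $X$ est géométriquement intègre et propre on a $\H^0(X,\cO_X)=k$, d'où $\H^0(X,\omega_X)=k\cdot\Omega^{\wedge m}$ de dimension $1$. Exactement comme dans la preuve du lemme précédent (l'application trace pour $\Fabs$ est duale de l'application de Hasse-Witt), l'invariant de Hasse-Witt de $X$ est non nul si et seulement s'il existe une forme de degré $d$ dont l'image par $C$ est non nulle ; la cible $\H^0(X,\omega_X)$ étant ici de dimension $1$, ceci équivaut à $C(\Omega^{\wedge m})\neq 0$. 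Par ailleurs l'hypothèse que toutes les $2$-formes sont fermées donne $\H^0(X,{\Fabs}_*Z\Omega_X^2)=\H^0(X,Z\Omega_X^2)=\H^0(X,\Omega_X^2)=:V$, et $C$ induit un endomorphisme $\sigma^{-1}$-semi-linéaire de $V$ ; un tel endomorphisme d'un espace de dimension finie est surjectif si et seulement s'il est injectif.

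La multiplicativité de $C$ sur les formes fermées, soit $C(\Omega_1\wedge\cdots\wedge\Omega_m)=C(\Omega_1)\wedge\cdots\wedge C(\Omega_m)$, rend commutatif le carré formé de l'application produit $\textup{Sym}^m V\to\H^0(X,\omega_X)$ (surjective puisque $\Omega^{\wedge m}\neq 0$), de $\textup{Sym}^m(C)$ et de $C$ sur $\H^0(X,\omega_X)$. Si $C$ est surjectif, donc bijectif, sur $V$, alors $\textup{Sym}^m(C)$ est bijectif, la composée $\textup{Sym}^m V\to\H^0(X,\omega_X)\to\H^0(X,\omega_X)$ (produit suivi de $C$) est surjective, et donc $C$ est non nul sur $\H^0(X,\omega_X)$ : l'invariant de Hasse-Witt est non nul. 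C'est une des deux implications.

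Réciproquement, supposons l'invariant de Hasse-Witt non nul, c'est-à-dire $C$ bijectif sur la droite $\H^0(X,\omega_X)$, et montrons que $C$ est injectif sur $V$. Dans le cas contraire, soit $\alpha\in V$, $\alpha\neq 0$, avec $C(\alpha)=0$. Pour tout produit $\beta$ de $m-1$ éléments de $V$ (qui est fermé, donc dans $\H^0(X,Z\Omega_X^{2m-2})$) on a $C(\alpha\wedge\beta)=C(\alpha)\wedge C(\beta)=0$, et comme $\alpha\wedge\beta\in\H^0(X,\omega_X)$ avec $C$ injectif sur cette droite, on obtient $\alpha\wedge\beta=0$. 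Autrement dit $\alpha$ est dans le noyau à gauche de l'accouplement $\H^0(X,\Omega_X^2)\times R_{m-1}\to\H^0(X,\omega_X)$, où $R_{m-1}$ désigne l'image de $\textup{Sym}^{m-1}V\to\H^0(X,\Omega_X^{2m-2})$ et l'accouplement est donné par le produit $\wedge$.

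L'obstacle principal est donc d'établir que cet accouplement est non dégénéré à gauche, c'est-à-dire que la sous-algèbre graduée de $\bigoplus_j\H^0(X,\Omega_X^{2j})$ engendrée par les $2$-formes globales est de Gorenstein, de socle $\H^0(X,\omega_X)$ en degré $m$. Cette non-dégénérescence n'a rien de formel — l'accouplement des sections globales d'un fibré et de son dual peut très bien être dégénéré — et doit utiliser la géométrie symplectique : non-dégénérescence ponctuelle de $\Omega$ et théorème de Lefschetz difficile sur l'algèbre extérieure fibrée (qui ramène, pour $\alpha$ non primitif, à $\alpha\wedge\Omega^{\wedge(m-1)}\neq 0$, et pour $\alpha$ primitif à la non-dégénérescence de la forme $\alpha\mapsto\alpha\wedge\alpha\wedge\Omega^{\wedge(m-2)}$), conjuguée à la dualité de Poincaré hodgienne des formes holomorphes globales sur ces variétés. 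Cette non-dégénérescence acquise, on conclut $\alpha=0$, une contradiction ; donc $C$ est injectif, donc surjectif, sur $V$, ce qui achève la preuve.
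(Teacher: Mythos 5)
Votre mise en place et l'implication \og $C$ surjectif $\Rightarrow$ invariant de Hasse-Witt non nul\fg~sont correctes~: la r\'eduction \`a $C(\Omega^{\wedge m})\neq 0$ via la dualit\'e entre l'application trace et l'application de Hasse-Witt, l'identification $\H^0(X,\omega_X)=k\cdot\Omega^{\wedge m}$ (qui utilise la propret\'e et l'int\'egrit\'e g\'eom\'etrique) et l'argument par $\textup{Sym}^m(C)$ --- qui \'evite d'avoir \`a montrer directement que $C(\Omega)^{\wedge m}\neq 0$ --- sont pr\'ecis\'ement les \og arguments analogues\fg~que le texte invoque sans les \'ecrire~; l'article ne donne en effet aucune d\'emonstration de ce lemme et ne l'utilise pas par la suite, de sorte qu'il n'y a pas de preuve d\'etaill\'ee \`a laquelle comparer la v\^otre.

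En revanche, l'implication r\'eciproque de votre proposition comporte une lacune r\'eelle, que vous signalez d'ailleurs vous-m\^eme~: tout est report\'e sur la non-d\'eg\'en\'erescence \`a gauche de l'accouplement produit $V\times R_{m-1}\to\H^0(X,\omega_X)$, o\`u $V=\H^0(X,\Omega_X^2)$ et $R_{m-1}$ est l'image de $\textup{Sym}^{m-1}V$ dans $\H^0(X,\Omega_X^{2m-2})$, et cette non-d\'eg\'en\'erescence n'est pas \'etablie. Les ingr\'edients que vous \'evoquez ne la donnent pas~: fibre \`a fibre, l'application $\alpha\mapsto\alpha\wedge\Omega^{\wedge(m-1)}$ va d'un espace de dimension $m(2m-1)$ vers un espace de dimension $1$ et a donc un noyau \'enorme d\`es que $m\ge 2$, si bien que le th\'eor\`eme de Lefschetz difficile ponctuel ne tranche rien --- toute la question est de savoir quelles formes globales de degr\'e $2m-2$ sont disponibles pour tester $\alpha$, et c'est une question globale~; quant \`a la \og dualit\'e de Poincar\'e hodgienne des formes holomorphes globales\fg, ce n'est pas un th\'eor\`eme en caract\'eristique $p$. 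La non-d\'eg\'en\'erescence voulue se v\'erifie dans les exemples standards (vari\'et\'es ab\'eliennes, produits de surfaces symplectiques, cas irr\'eductible o\`u $\dim V=1$), mais je ne vois pas d'argument formel g\'en\'eral~; la voie naturelle pour \'etablir \og invariant de Hasse-Witt non nul $\Rightarrow$ $C$ surjectif\fg~passerait plut\^ot par la machinerie cristalline du lemme \ref{lemm:scindageversusfrobenius}. En l'\'etat, cette moiti\'e de votre d\'emonstration est incompl\`ete.
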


\subsection{Scindage de Frobenius} 
Soit $X$ une vari\'et\'e alg\'ebrique sur $k$.
On note encore $\Fabs : X \to X$ le morphisme de Frobenius absolu. On dit, suivant Mehta et Ramanathan
(\cite{metha_ramanathan85}), que
l'application de Frobenius est scind\'ee (on dit que $X$ est \og Frobenius split\fg~en anglais) si l'homomorphisme de
$\cO_X$-modules $\Fabs^\sharp : \cO_X \to {\Fabs}_*\cO_X$
l'est, \cad, s'il existe un morphisme $\cO_X$-lineaire $\varphi : {\Fabs}_*\cO_X \to \cO_X$ tel que
$\varphi\circ\Fabs^\sharp$ soit l'application identit\'e de $\cO_X$.

\medskip

On termine ce paragraphe avec un \'enonc\'e dont on ne se servira pas par la suite faisant le lien entre
l'existence d'un scindage de Frobenius et l'invariant de Hasse-Witt de $X$. 

\begin{lemm}\label{lemme:scindage}
Soit  $X$ une vari\'et\'e symplectique irr\'eductible propre sur $k$
de dimension $d=2m$.
Soit $\Omega\in\textup{H}^0(X,\Omega_X^2)\setminus \{0\}$.
On a alors $C(\Omega)\neq 0$ si et seulement si l'application de Frobenius est
scind\'ee.
\end{lemm}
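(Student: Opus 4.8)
The plan is to reduce the statement to two facts: that the space of candidate splittings is one-dimensional, and that the relative trace restricts on top-degree forms to the Cartier operator (the compatibility recalled just before Lemma~\ref{lemm:hassewittversusscindage}). Once these are in hand, a single evaluation pins down the biconditional.

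First I would record the consequences of irreducible symplecticity. Since $\H^0(X,\Omega_X^2)=k\cdot\Omega$ with $\Omega$ closed and non-degenerate, the section $\eta:=\Omega^{\wedge m}$ is a nowhere-vanishing global $2m$-form; hence $\omega_X\simeq\cO_X$, and because $X$ is proper and geometrically integral one has $\H^0(X,\cO_X)=k$ and $\H^0(X,\omega_X)=k\cdot\eta$. As $C(\Omega)\in\H^0(X,\Omega_X^2)=k\cdot\Omega$, write $C(\Omega)=\lambda\Omega$; the multiplicativity of the Cartier operator used in the proof of Lemma~\ref{lemm:hassewittversusscindage} gives $C(\eta)=C(\Omega)^{\wedge m}=\lambda^m\eta$. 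Thus $C(\Omega)\neq0$ if and only if $\lambda\neq0$, if and only if $C(\eta)\neq0$.

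Next I would show that the potential splittings form a line. By relative duality for the finite flat morphism $\Fabs$ one has $\mathcal{H}om_{\cO_X}({\Fabs}_*\cO_X,\cO_X)\simeq{\Fabs}_*\Fabs^{!}\cO_X={\Fabs}_*\omega_X^{\otimes(1-p)}$, which is isomorphic to ${\Fabs}_*\cO_X$ because $\omega_X\simeq\cO_X$; taking global sections gives $\textup{Hom}_{\cO_X}({\Fabs}_*\cO_X,\cO_X)\simeq\H^0(X,\cO_X)=k$. Hence every $\cO_X$-linear $\varphi:{\Fabs}_*\cO_X\to\cO_X$ is a scalar multiple of a fixed generator $\varphi_0$, and $X$ is Frobenius split exactly when $\varphi_0\circ\Fabs^\sharp\neq0$; as $\Fabs^\sharp$ and $\varphi_0$ are $\cO_X$-linear, this composite is multiplication by the constant $\varphi_0(1)\in\H^0(X,\cO_X)=k$. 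The heart of the argument is then to evaluate $\varphi_0(1)$: taking for $\varphi_0$ the image of the trace $\Tr:{\Fabs}_*\omega_X\to\omega_X$ under the projection-formula isomorphism ${\Fabs}_*\omega_X\otimes\omega_X^{-1}\simeq{\Fabs}_*(\omega_X\otimes\Fabs^*\omega_X^{-1})={\Fabs}_*\omega_X^{\otimes(1-p)}\simeq{\Fabs}_*\cO_X$, the section $1\in{\Fabs}_*\cO_X$ corresponds to $({\Fabs}_*\eta)\otimes\eta^{-1}$, so that $\varphi_0(1)=\Tr(\eta)\cdot\eta^{-1}$. Since $\Tr$ agrees on top forms with the Cartier operator, $\Tr(\eta)=C(\eta)=\lambda^m\eta$, whence $\varphi_0(1)=\lambda^m$. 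Therefore $X$ is Frobenius split if and only if $\lambda^m\neq0$, that is, if and only if $C(\Omega)\neq0$.

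I expect the main obstacle to be the bookkeeping in the evaluation of $\varphi_0(1)$: verifying that, through the relative-duality and projection-formula identifications, $\varphi_0\circ\Fabs^\sharp$ is genuinely multiplication by $\Tr(\eta)/\eta$, and that the relative trace restricts on $\Omega_X^{2m}$ to the Cartier operator $C$. Both points belong to the duality formalism set up in the preliminaries on the Cartier operation. Should the explicit generator be awkward, an alternative is to use Lemma~\ref{lemm:hassewittversusscindage}(2) to replace the condition $C(\Omega)\neq0$ by non-vanishing of the Hasse-Witt invariant, and then to note that, $\H^{2m}(X,\cO_X)$ and $\textup{Hom}_{\cO_X}({\Fabs}_*\cO_X,\cO_X)$ both being one-dimensional and paired via the trace, the operator $\Fabs^*$ on $\H^{2m}(X,\cO_X)$ is bijective precisely when $\varphi_0$ restricts to a splitting of $\Fabs^\sharp$.
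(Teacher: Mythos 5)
Your argument is correct and follows essentially the same route as the paper: both reduce the splitting of $\Fabs^\sharp$ to the non-vanishing of the trace map ${\Fabs}_*\omega_X\to\omega_X$ on global sections, identify that trace with the Cartier operator on top-degree forms, and conclude via $C(\Omega^{\wedge m})=\lambda^m\,\Omega^{\wedge m}$ where $C(\Omega)=\lambda\Omega$. The only difference is that where the paper invokes \cite[Proposition 1.3.7]{brion_kumar} for the equivalence between splitting $\Fabs^\sharp$ and splitting the trace, you unpack that black box by computing $\textup{Hom}_{\cO_X}({\Fabs}_*\cO_X,\cO_X)\simeq k$ via relative duality and triviality of $\omega_X$, and evaluating the generator at $1$ --- a correct and self-contained substitute.
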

\begin{proof}
On sait que l'application
$\cO_X$-lin\'eaire 
$$\cO_X \lra {\Fabs}_*\cO_X$$
est scind\'ee si et seulement si \og l'application trace\fg~
$${\Fabs}_*Z\Omega_X^{d}={\Fabs}_*\omega_X \lra \omega_X$$ 
l'est (voir par exemple \cite[Proposition 1.3.7]{brion_kumar})~; c'est encore \'equivalent au fait que 
l'application
$$\H^0(X,{\Fabs}_*Z\Omega_X^m)=\H^0(X,{\Fabs}_*\Omega_X^m) \lra \H^0(X,\Omega_X^m)$$
soit non nulle puisque $\Omega^{\wedge m}$ est un g\'en\'erateur de $\omega_X$ en tout point et $k$ est un corps
parfait. On termine comme dans la d\'emonstration du lemme \ref{lemm:hassewittversusscindage}.
\end{proof}

\section{Sur un corps fini} 

Soit $k$ un corps fini 
de caract\'eristique $p$
\`a $q=p^a$ \'el\'ements, o\`u $a$ est un entier naturel non nul.  On note $W=W(k)$ l'anneau des vecteurs de Witt de
$k$,
$K_0=W[\frac{1}{p}]$ son corps des fractions et $\sigma$ l'automorphisme de Frobenius de $W$.
Soit  $X$ une vari\'et\'e sur $k$.
Le morphisme de Frobenius absolu $\Fabs : X \to X $ induit par transport de structures un endomorphisme
$\sigma$-lin\'eaire de la cohomologie cristalline $\Hcris^{\bullet}(X/W)$ not\'e $\Fcris$.
\begin{lemm}\label{lemm:scindageversusfrobenius}
Soit  $X$ une vari\'et\'e alg\'ebrique projective et lisse sur $k$ de dimension $d$.
On suppose 
$\Hcris^{\bullet}(X/W)$ sans torsion et on suppose que la suite spectrale de Hodge 
$E_1^{ij}=\H^j(X,\Omega_{X}^i)\Longrightarrow \Hdr^{i+j}(X/k)$
d\'eg\'en\`ere en $E_1$.
Soit $s \in \{0,\ldots,d\}$.
Si l'une des valeurs propres de l'endomorphisme
$K_0$-lin\'eaire $\Fcris^a$ de $\Hcris^s(X/W)\otimes_W K_0$ a une valuation $p$-adique nulle (o\`u la valuation est
normalis\'ee par la condition que la valuation $p$-adique de $q=p^a$ est 1) alors 
il existe $\Omega\in \H^0(X,Z\Omega_X^s)$ telle que
$C(\Omega)\neq 0$.
\end{lemm}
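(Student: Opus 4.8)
The plan is to reduce the statement to the presence of a Frobenius eigenvalue of \emph{maximal} slope $s$, and then to read off the nonvanishing of the Cartier operator from the crystalline Frobenius. First I would record a consequence of the degeneration hypothesis: since the Hodge--de Rham spectral sequence degenerates at $E_1$, the differential induced by $d_X$ on $\H^0(X,\Omega_X^{s})\to \H^0(X,\Omega_X^{s+1})$ vanishes, so every global $s$-form is closed and $\H^0(X,Z\Omega_X^{s})=\H^0(X,\Omega_X^{s})$. It therefore suffices to produce a nonzero value of the $\sigma^{-1}$-linear operator $C$ on $\H^0(X,\Omega_X^{s})$ induced by the Cartier operator.

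Next I would exploit the duality available on $\Hcris^{\bullet}(X/W)$. Combining Poincar\'e duality with the hard Lefschetz theorem (cup-product with the class of an ample divisor, on which $\Fcris^a$ acts through multiplication by $q$) yields a perfect, $\Fcris^a$-compatible pairing on $\Hcris^{s}(X/W)\otimes_W K_0$ with values in $K_0(-s)$; concretely, $\Fcris^a$ multiplies the value of the pairing by $q^s$. It follows that the multiset of eigenvalues of $\Fcris^a$ on $\Hcris^{s}(X/W)\otimes_W K_0$ is stable under $\alpha\mapsto q^{s}/\alpha$, i.e. the Newton polygon is symmetric about $s/2$. The hypothesis furnishes an eigenvalue $\alpha$ of valuation $0$; the functional equation then produces the eigenvalue $q^{s}/\alpha$, of valuation $s$. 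Since $s\le d$, this is the maximal slope occurring in degree $s$, so $\Hcris^{s}(X/W)\otimes_W K_0$ has a nonzero part of slope exactly $s$ (in particular $\H^0(X,\Omega_X^{s})\neq 0$).

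Finally I would convert this slope-$s$ information into $C\neq 0$. Under the two standing hypotheses---$\Hcris^{\bullet}(X/W)$ torsion-free and Hodge--de Rham degenerate at $E_1$---Mazur's theorem applies and describes $\Fcris$ in terms of the Hodge filtration on $\Hdr^{s}(X/k)=\Hcris^{s}(X/W)\otimes_W k$: $\Fcris$ carries a lift of the top step $\mathrm{Fil}^{s}=\H^0(X,\Omega_X^{s})$ into $p^{s}\Hcris^{s}(X/W)$, and the $\sigma$-linear map obtained by reducing the divided Frobenius $p^{-s}\Fcris$ on $\mathrm{gr}^{s}=\H^0(X,\Omega_X^{s})$ is the inverse Cartier operator $C^{-1}\colon \H^0(X,\Omega_X^{s})\to \H^0(X,\Omega_X^{s}/B\Omega_X^{s})$. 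A slope-$s$ eigenvalue means that, on the corresponding subspace, $p^{-s}\Fcris$ is a unit-root isocrystal, whose reduction modulo $p$ is nonzero; hence $C^{-1}\neq 0$, and therefore $C$ is nonzero on $\H^0(X,\Omega_X^{s})$, giving the required closed form $\Omega$ with $C(\Omega)\neq 0$. (For $s=d$ this recovers, through $\H^0(X,\omega_X)$, the equivalence already used in Lemme~\ref{lemm:hassewittversusscindage}.)

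The first two steps are routine and rest only on the structures listed as available (the $E_1$-degeneration remark and the Poincar\'e/hard Lefschetz functional equation). The main obstacle is the last step: pinning down the precise form of Mazur's theorem valid under exactly these hypotheses, identifying $\mathrm{gr}^{s}_{\mathrm{Fil}}\Hdr^{s}$ with $\H^0(X,\Omega_X^{s})$, and checking that the reduction modulo $p$ of $p^{-s}\Fcris$ on this top graded piece is indeed the inverse Cartier operator, so that a unit eigenvalue of $p^{-s}\Fcris$ forces $C\neq 0$. Here one must keep careful track of the $\sigma$- versus $\sigma^{-1}$-semilinearity and of the compatibility between the conjugate filtration on de Rham cohomology and the Cartier isomorphism.
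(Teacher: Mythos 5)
Your first two steps (every global $s$-form is closed; Poincar\'e duality combined with hard Lefschetz converts the slope-$0$ eigenvalue into a slope-$s$ eigenvalue, the slopes lying in $[0,s]$) are exactly the paper's. The gap is in the final step, and it is twofold. First, the slope decomposition of $M=\Hcris^s(X/W)\otimes_W K_0$ is a statement about \emph{isocrystals}: the slope-$s$ part $M_{[s]}$ has no canonical reduction modulo $p$, so ``the reduction mod $p$ of the unit-root isocrystal $p^{-s}\Fcris$'' is not defined until you exhibit an integral lattice $N\subset\Hcris^s(X/W)$ with $\Fcris(N)=p^sN$ which is moreover a \emph{direct summand} of $\Hcris^s(X/W)$ --- only then does $N/pN$ inject into $\Hdr^s(X/k)$, and land in $\mathrm{Fil}^s=\H^0(X,\Omega_X^s)$ because $\Fcris(N)\subset p^s\Hcris^s(X/W)$. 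This integral splitting is the real technical content: the paper gets it by identifying $M_{[s]}$ with $\H^0(X,W\Omega_X^s)\otimes_WK_0$ via the slope spectral sequence of the de~Rham--Witt complex and then applying Katz's splitting theorem [Katz79, Th.~1.6.1], which is available because Mazur--Ogus forces the relevant breakpoint of the Newton polygon onto the Hodge polygon. None of this appears in your sketch, and it cannot be waved away.

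Second, the concluding inference ``hence $C^{-1}\neq 0$, and therefore $C$ is nonzero'' is not valid. $C^{-1}:\Omega_X^s\to\cH^s(\Omega_X^{\bullet})$ is an isomorphism of sheaves, hence always injective on global sections; its nonvanishing says nothing. Similarly, the Mazur isomorphism $\mathrm{gr}^s_{\mathrm{Fil}}\Hdr^s\simeq\mathrm{gr}^{\mathrm{conj}}_s\Hdr^s$ induced by $p^{-s}\Fcris$ holds for \emph{every} $X$ satisfying the two standing hypotheses --- for instance a supersingular K3 surface --- while the conclusion $C(\Omega)\neq 0$ fails there. What has to be proved is that the \emph{natural} map $\H^0(X,Z\Omega_X^s)\to\H^0(X,Z\Omega_X^s/B\Omega_X^s)$ is nonzero on some element, and this is precisely where the slope-$s$ part is used: after base change to $\bar k$ the unit-root crystal $(N\otimes_WW(\bar k),\,p^{-s}\Fcris\otimes\bar\sigma)$ admits a basis of vectors $x$ fixed by the divided Frobenius, and for such an $x$ the identity $x=p^{-s}\Fcris(x)$, reduced modulo $p$ through the compatibility of $p^{-s}\Fcris$ with $C^{-1}$, shows that the class of $\bar x\in\H^0(X\otimes\bar k,\Omega_{X\otimes\bar k}^s)$ in $\H^0(X\otimes\bar k,\cH^s)$ equals $C^{-1}(\bar x)\neq 0$, whence $C(\bar x)\neq 0$. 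Without the Frobenius-fixed integral vector --- hence without the de~Rham--Witt/Katz step or an equivalent --- the argument does not close.
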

\begin{proof}
Si $(M,F)$ est un $F$-isocristal sur $k$ et $\lambda\in\bQ$, on note $M_\lambda$ le plus grand 
sous-F-isocristal de $M$ de pente $\lambda$ et, pour $I\subset\bQ$, on note 
$M_I=\bigoplus_{i\in I} M_{\lambda}$ (voir par exemple \cite[II.3.4]{illusie79}).

On note $\bar k$ une cl\^oture alg\'ebrique de $k$, $W(\bar k)$ l'anneau des vecteurs de Witt de $\bar k$,
$K_0(\bar k)=W(\bar k)[\frac{1}{p}]$ son corps des fractions et $\bar \sigma$ l'automorphisme de Frobenius de $W(\bar
k)$. On note enfin $\Fcrisbar$ l'endomorphisme de $\Hcris^{\bullet}(X\otimes\bar k/W(\bar k))$ induit par le Frobenius
absolu de $X\otimes\bar k$.
On a alors un isomorphisme canonique $\Hcris^{\bullet}(X\otimes\bar k/W(\bar k))\simeq \Hcris^{\bullet}(X/W)\otimes_{W}
W(\bar k)$ tel que $\Fcrisbar=\Fcris\otimes\bar\sigma$
car $\Hcris^{\bullet}(X/W)$ est sans torsion. On rappelle que pour tout entier 
$i\ge 0$, les pentes du $F$-isocristal 
$(\Hcris^{i}(X/W),\Fcris)$ sur $k$ sont, par d\'efinition, les pentes du $F$-isocristal
$(\Hcris^{i}(X\otimes\bar k/W(\bar k)),\Fcris)$ sur $\bar k$.

On consid\`ere le $F$-isocristal $(M=\Hcris^s(X/W)\otimes_W K_0,F=\Fcris\otimes \Id_{K_0})$.
On rappelle qu'alors les pentes du $F$-isocristal $M$ sont les valuations $p$-adiques des valeurs propres
de l'endomorphisme $\Fcris^a\otimes \Id_{K_0}$ de $\Hcris^s(X/W)\otimes_W K_0$ (voir \cite{manin63}). On a donc
$$\dim(M_{[0]})\ge 1$$ 
par hypoth\`ese.

On note $\eta$ la classe d'une section hyperplane de $X$ dans $\Hcris^2(X/W)\otimes_W K_0$.
On d\'eduit du 
th\'eor\`eme de Lefschetz fort (\cite{katz-messing74}) et du th\'eor\`eme de dualit\'e de Poincar\'e
(\cite{berthelotLN74}) que
l'application
\begin{eqnarray*}
M \otimes M & \lra &  \Hcris^{2d}(X/ W)\otimes_W  K_0(s-d) \simeq K_0(s)\\
\alpha\otimes\beta & \mapsto & \alpha\cdot\beta\cdot\eta^{d-s}
\end{eqnarray*}
induit un isomorphisme (d\'ependant du choix de $\eta$)
$$\textup{Hom}_{K_0}(M,K_0)\simeq M(-s)$$
compatible \`a l'action de Frobenius. On a donc
$$\dim(M_{[s]})=\dim(M_{[0]})\ge 1.$$
On en d\'eduit aussi que les pentes de $M$ sont dans $[0,s]$ (voir \'egalement \cite{berthelot73}).

On rappelle qu'il existe une \og $W$-alg\`ebre diff\'erentielle gradu\'ee strictement
anti-commutative\fg\\
$W\Omega_X^{\bullet}$ (d\'ependant fonctoriellement de $X$) appel\'ee complexe de de Rham-Witt
dont la cohomologie calcule
la cohomologie cristalline $\Hcris^{\bullet}(X/W)$ (\cite{illusie79}). On en d\'eduit l'existence d'une suite spectrale,
dite suite spectrale des pentes,
$$E_1^{ij}=\H^j(X,W\Omega_X^i)\Longrightarrow \Hcris^{i+j}(X/W)$$
o\`u, pour tout $i\ge 0$, les $K_0$-espaces vectoriels $\H^j(X,W\Omega_X^i)\otimes_{W}K_0$ sont de dimensions finies
(voir \cite[Th\'eor\`eme II.2.13]{illusie79}) et $\H^0(X,W\Omega_X^i)$ est un $W$-module libre de type fini 
(voir \cite[Corollaire II.2.17]{illusie79}). Le Frobenius absolu de $X$ induit un
endomorphisme (de $W$-alg\`ebre diff\'erentielle gradu\'ee) de
$W\Omega_X^{\bullet}$ not\'e \'egalement $\Fcris$.
La suite spectrale des pentes d\'eg\'en\`ere en $E_1$ et
fournit des 
isomorphismes de $F$-isocristaux (voir \cite[Corollaire II.3.5]{illusie79})
$$(\H^j(X,W\Omega_X^i)\otimes_{W}K_0,\Fcris\otimes\Id_{K_0})=({\Hcris^{i+j}(X/W)\otimes_{W}K_0}_{[i,i+1[},
\Fcris\otimes\Id_{K_0})$$
et on a donc 
$$(\H^0(X,W\Omega_X^s)\otimes_{W}K_0,\Fcris\otimes\Id_{K_0})=(({\Hcris^s(X/W)\otimes_{W}K_0})_{[s]},
\Fcris\otimes\Id_{K_0}).$$

On note $\alpha_1,\ldots,\alpha_r$ les valeurs propres
de l'endomorphisme $\Fcris^a\otimes \Id_{K_0}$ de $\Hcris^s(X/W)\otimes_W K_0$.
On rappelle que 
le
polygone de Newton 
du $F$-isocristal $M=\Hcris^s(X/W)\otimes_W K_0$
est le polygone de $\bR^2$ obtenu en portant bout \`a bout 
\`a partir du point $(0,0)$
des segments de pente la valuation
$p$-adique de $\alpha_i$ et de projection horizontale la multiplicit\'e de la valeur propre $\alpha_i$.
Le
polygone de Hodge \og g\'eom\'etrique\fg~de $X$ sur $k$ est le polygone de $\bR^2$ obtenu en portant bout \`a bout des
segments de pente $i$ et de projection horizontale le nombre de Hodge $h^{i,s-i}:=\dim_k(\H^{s-i}(X,\Omega_X^i))$
partant \'egalement du point $(0,0)$.

On sait, d'apr\`es le th\'eor\`eme de Mazur et Ogus (\cite{mazur72}, \cite{mazur73}et \cite{berthelot_ogus78}), que
le polygone de
Newton de
$(\Hcris^2(X/W)\otimes_W K_0,\Fcris\otimes\Id_{K_0})$ est au-dessus du polygone de Hodge \og g\'eom\'etrique\fg~de $X$
sur $k$ et qu'ils ont
m\^emes extr\'emit\'es. On d\'eduit alors de \cite[Theorem 1.6.1]{katz79} que le $F$-cristal 
($\H^0(X,W\Omega_X^s)$, $\Fcris$) est un facteur direct de ($\Hcris^{s}(X/W),\Fcris$).

On sait \'egalement que l'endomorphisme (de $W$-alg\`ebre diff\'erentielle gradu\'ee) de $W\Omega_X^{\bullet}$ induit
par le Frobenius $\Fabs$ est \og divisible\fg~par $p^i$ sur $W\Omega_X^i$ ($i\ge 0$) d'apr\`es \cite[I.2.19]{illusie79}.
On en d\'eduit que le $F$-isocristal ($\H^0(X,W\Omega_X^s)\otimes_{W}K_0$, $\Fcris\otimes\Id_{K_0}$) est pur de pente
$s$
puis que
le $F$-isocristal ($\H^0(X,W\Omega_X^s)\otimes_{W}K_0$, $\frac{1}{p^s}\Fcris\otimes\Id_{K_0}$) est pur de pente $0$.

Le $F$-cristal (sur $\bar k$) 
$(\H^0(X\otimes\bar k, W\Omega_{X\otimes\bar k}^s)=
\H^0(X, W\Omega_{X}^s)\otimes_{W} W(\bar k),\frac{1}{p^s}\Fcris \otimes \bar\sigma)$
est donc certainement un cristal unit\'e
(voir \cite[2.1]{katz72}) et poss\`ede une $W(\bar k)$-base $(W\bar \Omega_t)_{t\in T}$ telle que
$\frac{1}{p^s}\Fcris(W\bar \Omega_t)=W\bar \Omega_t$ pour tout $t\in T$ o\`u $W\bar \Omega_t\in \H^0(X\otimes\bar k,
W\Omega_{X\otimes\bar k}^s)$. On note au passage que $T$ est non vide puisque $\dim(M_{[s]})\ge 1.$

La projection $W\Omega_{X\otimes\bar k}^{\bullet} \to \Omega_{X\otimes\bar k}^{\bullet}$ induit un morphisme de la suite
spectrale des
pentes 
$$E_1^{ij}=\H^j(X\otimes\bar k,W\Omega_{X\otimes\bar k}^i)\Longrightarrow \Hcris^{i+j}(X\otimes\bar k/W(\bar k))$$ 
vers la suite spectrale de Hodge
$$E_1^{ij}=\H^j(X\otimes\bar k,\Omega_{X\otimes\bar k}^i)\Longrightarrow \Hdr^{i+j}(X\otimes\bar k/\bar k).$$
On sait, par hypoth\`ese, que la suite spectrale de Hodge d\'eg\'en\`ere en $E_1$~; en particulier, toutes les formes
diff\'erentielles sur $X$ ou $X\otimes\bar k$ sont ferm\'ees.
On a donc un diagramme commutatif

$$
\centerline{
\xymatrix{
\H^0(X\otimes\bar k,W\Omega_{X\otimes\bar k}^s)\otimes \bar k \ar[r] \ar[d] &  \Hcris^{s}(X\otimes\bar k/W(\bar
k))\otimes \bar k \ar[d] \\
\H^0(X\otimes\bar k,  Z\Omega_{X\otimes\bar k}^s)  \ar[r] & \Hdr^{s}(X\otimes\bar k/\bar k) 
}
}
$$
o\`u, la fl\`eche verticale de droite, d\'eduite de la suite exacte des coefficients universels (voir
\cite[VII 1.1.11]{berthelotLN74}, est un isomorphisme. On sait aussi que la
fl\`eche horizontale du haut est injective puisque 
$\H^0(X\otimes\bar k,W\Omega_{X\otimes\bar k}^s)$ est un facteur direct de 
$\Hcris^{s}(X\otimes\bar k/W(\bar k))$. On obtient ainsi une fl\`eche injective
$$\H^0(X\otimes\bar k,W\Omega_{X\otimes\bar k}^s)\otimes \bar k \hookrightarrow 
\H^0(X\otimes\bar k,  Z\Omega_{X\otimes\bar k}^s).$$
On en d\'eduit 
que, pour tout $t\in T$, l'image $\bar\Omega_t$ de $W\bar\Omega_t$
par
$$\H^0(X\otimes\bar k,W\Omega_{X\otimes\bar k}^s) \to \H^0(X\otimes\bar k,W\Omega_{X\otimes\bar k}^s)\otimes \bar k \to
\H^0(X\otimes\bar k, 
Z\Omega_{X\otimes\bar k}^s)=\H^0(X\otimes\bar k, \Omega_{X\otimes\bar k}^s)$$
est non nulle.

On rappelle enfin que, par choix de $W\bar\Omega_t$, on a $\frac{1}{p^s}\Fcris(W\bar\Omega_t)=W\bar\Omega_t$. Or
$\frac{1}{p^s}\Fcris$
rel\`eve l'op\'erateur $C^{-1}$, autrement dit le diagramme
$$
\centerline{
\xymatrix{
\H^0(X\otimes\bar k,W\Omega_{X\otimes\bar k}^s) \ar[r]^{\frac{1}{p^{s}}\Fcris} \ar[d] &  \H^0(X\otimes\bar
k,W\Omega_{X\otimes\bar k}^s) \ar[d] \\
\H^0(X\otimes\bar k, \Omega_{X\otimes\bar k}^s)  \ar[r] \ar[d]_{C^{-1}} & 
\H^0(X\otimes\bar k, \Omega_{X\otimes\bar k}^s/B\Omega_{X\otimes\bar k}^s) \\
\H^0(X\otimes\bar k,\cH^s(\Omega_X^{\bullet})) \ar@{=}[r]
 & \H^0(X\otimes\bar k, Z\Omega_{X\otimes\bar k}^s/B\Omega_{X\otimes\bar k}^s) \ar[u]
}
}
$$
est commutatif (voir \cite[Proposition 3.3]{illusie79}). On a donc $C(\bar \Omega_t)\neq 0$ pour tout $t\in T$. On en
d\'eduit facilement qu'il existe $\Omega\in \H^0(X,Z\Omega_X^s)$ telle que
$C(\Omega)\neq 0$.
\end{proof}

\section{La m\'ethode d'Ogus, Bogomolov et Zahrin}

On consid\`ere un corps de
nombres $K$ et on fixe une cl\^oture alg\'ebrique $\bar K$ de $K$. Soit $\ell$ un nombre premier.

\medskip

On rappelle dans un premier temps l'\'enonc\'e de la conjecture de Tate (\cite{tate65} et
\cite{tate94}) sur les corps de nombres.

\begin{conj}[Tate]Soit $X$
une vari\'et\'e projective et lisse sur $K$, g\'eom\'etriquement int\`egre.
L'application cycle induit une surjection
$$\NS(X\otimes \bar K))\otimes_{\bZ}\bQ_\ell
\twoheadrightarrow\bigcup_U
\Het^2(X\otimes \bar K,\bQ_\ell)(1)^U$$
o\`u $U$ parcourt l'ensemble des sous-groupes ouverts de $\Gal(\bar K/K)$.
\end{conj}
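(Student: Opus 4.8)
The plan is to concentrate on surjectivity, since injectivity of the cycle map $\NS(X\otimes\bar K)\otimes_{\bZ}\bQ_\ell \to \Het^2(X\otimes\bar K,\bQ_\ell)(1)$ is classical: a divisor class in the kernel is numerically trivial, and $\NS$ modulo numerical equivalence is a free $\bZ$-module of finite rank that injects into $\ell$-adic cohomology. The target $\bigcup_U \Het^2(X\otimes\bar K,\bQ_\ell)(1)^U$ is exactly the space of Tate classes, the largest $\bQ_\ell$-subspace of $\Het^2(1)$ on which $\Gal(\bar K/K)$ acts through a finite quotient; since every divisor is defined over a finite extension of $K$, the image of the cycle map does land there. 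The entire content is thus that every Tate class is a $\bQ_\ell$-linear combination of divisor classes, and replacing $K$ by the finite extension cut out by an open $U$ fixing a given class reduces us to showing that a genuinely $\Gal(\bar K/K)$-invariant class is algebraic.

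The first real input is the case of abelian varieties, where the statement is Faltings' theorem: for $A/K$ the Galois module $\Het^1(A\otimes\bar K,\bQ_\ell)$ is semisimple and $\End(A)\otimes\bQ_\ell \xrightarrow{\sim} \End_{\Gal(\bar K/K)}\!\big(\Het^1(A\otimes\bar K,\bQ_\ell)\big)$. Because $\Het^2(A)(1)=\textstyle\bigwedge^2 \Het^1(A)(1)$ and divisor classes on $A$ correspond, via the polarization pairing, to the symmetric Galois-equivariant homomorphisms $A\to A^\vee$, this identification forces every Tate class in $\Het^2(A)(1)$ to be algebraic. By the K\"unneth formula and Poincar\'e duality the same conclusion then propagates to any $X$ whose degree-two cohomology is, compatibly with the Galois action and with cycle classes, a direct summand of $\Het^{\bullet}$ of an abelian variety.

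The decisive step for the varieties studied here is the Kuga--Satake reduction. For $X$ a K3 surface, or more generally an irreducible symplectic variety with $b_2\ge 4$, the Kuga--Satake construction (Deligne, and Andr\'e in the symplectic case) produces, over a finite extension $L$ of $K$, an abelian variety $A$ together with a $\Gal(\bar K/L)$-equivariant embedding of the primitive part of $\Het^2(X\otimes\bar K,\bQ_\ell)(1)$ into an endomorphism module built from $\Het^1(A\otimes\bar K,\bQ_\ell)$. A Tate class on $X$ is thereby transported to a Tate endomorphism-class on $A$, which Faltings' theorem makes algebraic; transporting the resulting cycle back---and adding the hyperplane class to account for the non-primitive line---exhibits the original class as algebraic. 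This is precisely the argument by which Andr\'e proves the conjecture for irreducible symplectic varieties, and it is all the present paper needs.

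The main obstacle is that this plan does not reach the statement in the generality in which it is phrased: for an arbitrary smooth projective geometrically integral $X/K$ there is no Kuga--Satake-type bridge relating $\Het^2(X)(1)$ to the $H^1$ of an abelian variety, and manufacturing algebraic divisor classes out of arbitrary Tate classes without such a bridge is exactly the open heart of the Tate conjecture for divisors over number fields. The only step that genuinely fails in general is therefore the passage from a Tate class to an algebraic cycle in the absence of an auxiliary abelian variety: the statement is a theorem whenever that bridge exists---for abelian varieties, K3 surfaces, irreducible symplectic varieties with $b_2\ge 4$, and more generally for varieties with abelian motive---and remains conjectural outside this range, which is why the present paper invokes it only through Andr\'e's theorem in the symplectic case.
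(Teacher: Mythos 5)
You have diagnosed the situation exactly as the paper does: this statement is labelled a conjecture and receives no proof in the paper --- it is the Tate conjecture for divisors over number fields, entering only as a hypothesis in Lemmas \ref{lemm:representationnontriviale} and \ref{lemm:bogomolov}, and it is verified for the varieties actually at stake not by an argument in the text but by citing \cite[Theorem 1.6.1]{andre96} in the proof of Theorem \ref{theo:principal}. Your account of the known cases --- surjectivity onto the Tate classes is the whole content, Faltings' theorem settles abelian varieties, and the Kuga--Satake correspondence reduces irreducible symplectic varieties with $b_2\ge 4$ to that case, which is precisely Andr\'e's route and all the paper needs --- is accurate, as is your conclusion that the statement in the generality in which it is phrased remains open.
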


Soit $v$ une place finie de $K$. Le corps r\'esiduel $k_v$
est un corps fini \`a $N_v=p_v^{a_v}$ \'el\'ements o\`u $p_v$ est la caract\'eristique du corps $k_v$ et 
$a_v$ est le degr\'e de l'extension $k_v$ sur $\bF_p$.
Si $V$ est un $\bQ_\ell$-espace vectoriel de dimension finie, 
$v$ est une place de $K$ et
$\rho : \Gal(\bar K/K) \lra \Gl(V))$
est une rep\'esentation non ramifi\'ee en $v$, on note $F_{v,\rho}\in \Gl(V)$ un \og \'el\'ement de
Frobenius\fg~(ou plus exactement la
classe de conjugaison correspondante). 

\medskip

On peut maintenant rappeler l'\'enonc\'e (d'un cas particulier) de la conjecture
de semi-simplicit\'e (voir \cite{katz91} et \cite{tate94}) sur les corps finis.

\begin{conj}[\og Conjecture de semi-simplicit\'e\fg]Soit $X$ une vari\'et\'e propre et lisse sur $K$. 
On consid\`ere la repr\'esentation 
$\rho : \Gal(\bar K/K) \lra \Gl(\Het^2(X\otimes \bar K,\bQ_\ell))$.
Il existe alors un ensemble fini $\cV$ de places ultram\'etriques de $K$ tels que, si $v$ est une place finie de
$K$ et $v\not\in \cV$ alors $\rho$ est non ramifi\'ee en $v$ et 
$F_{v,\rho}$ est semi-simple.
\end{conj}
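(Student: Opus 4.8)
The plan is to treat an arbitrary proper smooth $X/K$ directly and to reduce both assertions — non-ramification of $\rho$ and semi-simplicity of $F_{v,\rho}$ — to a single statement about the reductions of $X$ over the finite residue fields. First I would spread $X$ out: after inverting a finite set $\cV_0$ of places there is a smooth proper model $f:\cX\to U$ over the open $U=\Spec(\cO_K[\cV_0^{-1}])$. Put $\cV=\cV_0\cup\{\,v\mid\ell\,\}$. For $v\notin\cV$ the fibre $\cX_v$ is smooth and proper over the finite field $k_v$ and $p_v\neq\ell$, so the smooth proper base change theorem (\cite{sga4}) supplies an isomorphism $\Het^2(X\otimes\bar K,\bQ_\ell)\simeq\Het^2(\cX_v\otimes\bar{k}_v,\bQ_\ell)$ under which a decomposition group at $v$ acts through its quotient $\Gal(\bar{k}_v/k_v)$, the inertia acting trivially. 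This immediately gives that $\rho$ is unramified at every $v\notin\cV$ and identifies $F_{v,\rho}$ with the geometric Frobenius of $k_v$ acting on $\Het^2(\cX_v\otimes\bar{k}_v,\bQ_\ell)$. After this step the entire content of the statement is the one assertion: for all $v\notin\cV$, Frobenius acts semi-simply on the second $\ell$-adic cohomology of the reduction $\cX_v$.

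Next I would separate what purity delivers from what it does not. By Deligne's theorem every eigenvalue of $F_{v,\rho}$ is a Weil number of weight $2$, so $\Het^2$ is pure and carries no nontrivial weight filtration; moreover the classes of divisors (more generally the algebraic classes) span a Frobenius-stable subspace on which, after the Tate twist, an open subgroup acts trivially, so there Frobenius is of finite order up to the factor $N_v$ and poses no difficulty. Purity, however, does not force $F_{v,\rho}$ to be diagonalizable: the transcendental part of $\Het^2$ may a priori carry Jordan blocks whose eigenvalues all share the same absolute value. Ruling these out — that is, semi-simplicity of Frobenius on the transcendental part of $\Het^2$ over $k_v$ — is precisely the finite-field semi-simplicity conjecture of Tate and Grothendieck recalled above (\cite{tate94}).

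The hard part will therefore be exactly this transcendental semi-simplicity over the finite fields $k_v$, which for a general proper smooth variety is open, and for which the global ``for almost all $v$'' formulation offers no known shortcut: a single residue field already carries the full difficulty, and there is no mechanism by which discarding a finite (or even a positive-density) set of places can soften it. The only routes presently known pass through an algebraic correspondence relating the motive $h^2(X)$ to the first cohomology of an abelian variety, so that Frobenius semi-simplicity becomes the theorem of Weil and Tate for abelian varieties and transports back to $\Het^2$; such a correspondence, however, is not available for an arbitrary $X$.

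I would accordingly present the reduction of the first two paragraphs as the general framework of the argument — it genuinely establishes non-ramification and pins the statement down to a single, sharply formulated question — and record the transcendental finite-field semi-simplicity on $\Het^2$ as the one step that cannot be discharged for a general proper smooth variety and must be supplied by additional geometric input whenever it is available. This is why the assertion is stated here as a conjecture: the skeleton above is unconditional, but its decisive step coincides with an open problem, and a complete proof for a given $X$ requires producing, for that $X$, the missing relation between $\Het^2$ and the cohomology of abelian varieties.
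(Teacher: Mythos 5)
Your proposal responds correctly to what is actually being asked: the statement is labelled a \emph{conjecture} in the paper, and the paper gives no proof of it — rightly, since for a general proper smooth $X$ it is an open case of the Tate--Grothendieck semi-simplicity conjecture over finite fields. Your skeleton is sound and matches the paper's own (implicit) reduction: spreading out over $\Spec(\cO_K[\cV^{-1}])$ and smooth proper base change establish that $\rho$ is unramified outside $\cV$ and identify $F_{v,\rho}$ with geometric Frobenius acting on $\Het^2(\cX_v\otimes\bar k_v,\bQ_\ell)$; this is exactly the discussion the paper carries out just before Lemma~\ref{lemm:bogomolov}, citing SGA~4 and SGA~5. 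You are also right that purity (Deligne) does not exclude Jordan blocks, so nothing further can be squeezed out unconditionally, and that discarding finitely many (or density-zero many) places cannot soften the problem. Most usefully, the escape route you name — an algebraic correspondence relating $h^2(X)$ to the first cohomology of an abelian variety — is precisely what the paper implements for its special case: Theorem~\ref{theo:semisimplicite} proves the conjecture for an irreducible symplectic variety with $\rang(\NS(X\otimes\bar K))\ge 2$ or $b_2$ even, by using the Kuga--Satake construction (Proposition~\ref{prop:deligne}) to realize $\Pet^2$ inside $\End_C(\Het^1(A_v\otimes\bar k_v,\bZ_\ell))$ for an abelian variety $A_v$ with good reduction, importing Weil's semi-simplicity of Frobenius for abelian varieties, and handling finite extensions via Lemma~\ref{lemm:semisimplicite}. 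So your assessment coincides with the paper both on where the unconditional content stops and on which geometric input unlocks the cases the paper can treat.
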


On aura besoin dans la suite du r\'esultat facile suivant.

\begin{lemm}\label{lemm:representationnontriviale}
Soit $X$ une vari\'et\'e projective et lisse sur $K$, g\'eom\'etriquement int\`egre. On suppose que la
conjecture de Tate est vraie pour $X$.
On suppose enfin 
$$\rang(\NS(X\otimes \bar K)) < \dim_{\bQ_\ell}(\Het^2(X\otimes \bar K,\bQ_\ell)).$$
On note $G$ l'image du groupe de Galois absolu $\Gal(\bar K/K)$ par la repr\'esentation $\ell$-adique
$\Gal(\bar K/K) \lra \Gl(\Het^2(X\otimes \bar K,\bQ_\ell)(1))$. 
Alors le groupe de Lie
$\ell$-adique $G$
est de
dimension $\ge 1$.
\end{lemm}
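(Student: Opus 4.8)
Le plan est de raisonner par l'absurde en supposant le groupe de Lie $\ell$-adique $G$ de dimension $0$, c'est-\`a-dire fini. Sous cette hypoth\`ese, le noyau $U$ de la repr\'esentation $\Gal(\bar K/K) \lra \Gl(\Het^2(X\otimes \bar K,\bQ_\ell)(1))$ est un sous-groupe ouvert de $\Gal(\bar K/K)$ qui agit trivialement sur $\Het^2(X\otimes \bar K,\bQ_\ell)(1)$. On aurait donc $\Het^2(X\otimes \bar K,\bQ_\ell)(1)^U = \Het^2(X\otimes \bar K,\bQ_\ell)(1)$, de dimension $\dim_{\bQ_\ell}(\Het^2(X\otimes \bar K,\bQ_\ell))$ sur $\bQ_\ell$.

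La deuxi\`eme \'etape consiste \`a observer que la r\'eunion $\bigcup_U \Het^2(X\otimes \bar K,\bQ_\ell)(1)^U$, prise sur l'ensemble des sous-groupes ouverts $U$ de $\Gal(\bar K/K)$, co\"incide alors avec l'espace $\Het^2(X\otimes \bar K,\bQ_\ell)(1)$ tout entier (tout vecteur est fix\'e par le sous-groupe ouvert $U$). On invoquerait ensuite la conjecture de Tate, suppos\'ee vraie pour $X$~: l'application cycle
$$\NS(X\otimes \bar K)\otimes_{\bZ}\bQ_\ell \twoheadrightarrow \bigcup_U \Het^2(X\otimes \bar K,\bQ_\ell)(1)^U$$
\'etant surjective, son but serait de dimension $\dim_{\bQ_\ell}(\Het^2(X\otimes \bar K,\bQ_\ell))$ sur $\bQ_\ell$.

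Comme une surjection $\bQ_\ell$-lin\'eaire ne peut qu'augmenter (au sens large) la dimension de la source par rapport \`a celle du but, on en d\'eduirait
$$\rang(\NS(X\otimes \bar K)) = \dim_{\bQ_\ell}(\NS(X\otimes \bar K)\otimes_{\bZ}\bQ_\ell) \ge \dim_{\bQ_\ell}(\Het^2(X\otimes \bar K,\bQ_\ell)),$$
ce qui contredirait l'hypoth\`ese $\rang(\NS(X\otimes \bar K)) < \dim_{\bQ_\ell}(\Het^2(X\otimes \bar K,\bQ_\ell))$. Le groupe $G$ serait donc de dimension $\ge 1$.

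L'\'enonc\'e \'etant \og facile\fg, il n'y a pas r\'eellement d'obstacle majeur~; le seul point \`a soigner est de bien relier la finitude de $G$ \`a la trivialit\'e de l'action d'un sous-groupe ouvert, puis de remarquer que la conjecture de Tate force alors toute la cohomologie de degr\'e $2$ \`a \^etre engendr\'ee par des classes alg\'ebriques. Le fait de travailler avec la repr\'esentation tordue $\Het^2(X\otimes \bar K,\bQ_\ell)(1)$ plut\^ot qu'avec $\Het^2(X\otimes \bar K,\bQ_\ell)$ ne change rien \`a l'argument, puisque $G$ est par d\'efinition l'image de cette repr\'esentation tordue et que c'est bien sa finitude qui est mise en d\'efaut.
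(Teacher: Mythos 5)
Votre d\'emonstration est correcte et suit essentiellement le m\^eme argument que celle du texte~: on suppose $G$ fini, on en d\'eduit qu'un sous-groupe ouvert $U$ (le noyau de la repr\'esentation chez vous, $\Gal(\bar K/L)$ pour une extension finie $L$ convenable dans le texte, ce qui revient au m\^eme) agit trivialement sur $\Het^2(X\otimes \bar K,\bQ_\ell)(1)$, puis la conjecture de Tate et un comptage de dimensions donnent la contradiction. Rien \`a redire.
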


\begin{proof}
On suppose par l'absurde que $G$ est un groupe fini. Il existe donc une extension galoisienne finie $K\subset L\subset
\bar K$ telle que le groupe de Galois absolu
$U:=\Gal(\bar K/L)$ agisse trivialement sur $\Het^2(X\otimes \bar K,\bQ_\ell)(1)$. 
On a alors
$$\Het^2(X\otimes \bar K,\bQ_\ell)(1)^U=\Het^2(X\otimes \bar K,\bQ_\ell)(1)$$
et l'application cycle induit donc une surjection
$$\NS(X\otimes \bar K))\otimes_{\bZ}\bQ_\ell
\twoheadrightarrow
\Het^2(X\otimes \bar K,\bQ_\ell)(1).$$
On a obtenu la contradiction cherch\'ee.
\end{proof}

On note $\cO_K$ l'anneau des entiers de $K$. On consid\`ere une vari\'et\'e
$X$ propre et lisse sur $K$.
Soit $f : \cX \to \Spec(\cO_K)$ un mod\`ele entier de $X$, autrement dit, $\cX$ est un sch\'ema int\`egre et
$\cX\otimes K$ est isomorphe \`a $X$ sur $K$.
Soit $\cV$ un ensemble fini de places non archim\'ediennes de $K$ 
tel que $f$ soit lisse au-dessus de $\Spec(\cO_K[\cV^{-1}])$.
On consid\`ere la repr\'esentation 
$\rho : \Gal(\bar K/K) \lra \Gl(\Het^2(X\otimes \bar K,\bQ_\ell))$.
On fixe une place finie $v$ de $K$, $v\not\in\cV$, et on note $\Fabsv$ le Frobenius absolu de $\cX_v$.
On rappelle que, par le th\'eor\`eme de changement de base lisse (\cite[Expos\'e XVI]{sga4}) et \cite[Expos\'e
XV]{sga5},
les valeurs propres du \og Frobenius g\'eom\'etrique\fg~$F_{v,\rho}^{-1}$ 
sont aussi les valeurs propres de 
$\Fabsv\times \Id_{\bar{k}_v}$
agissant sur 
$\Het^2(\cX_v\otimes\bar k_v,\bQ_\ell)$. Le polyn\^ome 
$\det(\Id_{\Het^2(\cX_v\otimes\bar k_v,\bQ_\ell)} - F_{v,\rho}^{-1}t)$ 
est \`a coefficients entiers (ind\'ependants de $\ell\neq p_v$) d'apr\`es \cite{deligne:weil1}~: les
valeurs
propres de $F_{v,\rho}^{-1}$ sont donc des entiers alg\'ebriques.

\medskip

Le r\'esultat suivant est essentiellement d\^u \`a Ogus (\cite[Expos\'e VI]{deligneLN900}) d'une part,
Bogomolov et Zahrin (\cite{bogomolovzahrin}) d'autre part.

\begin{lemm}\label{lemm:bogomolov}
Soit $X$ une vari\'et\'e projective et lisse sur $K$, g\'eom\'etriquement int\`egre. On suppose que la
conjecture de Tate et la
conjecture de semi-simplicit\'e sont vraies pour $X$. 
On suppose \'egalement $\ell > 2 b_2$ o\`u $b_2:=\dim_\bQ(\Hb(X\otimes\bC,\bQ)$ et
on note $\rho$ la repr\'esentation
$$\Gal(\bar K/K) \lra \Gl(\Het^2(X\otimes \bar K,\bQ_\ell)).$$
On suppose enfin 
$$\rang(\NS(X\otimes \bar K)) < \dim_{\bQ_\ell}(\Het^2(X\otimes \bar K,\bQ_\ell)).$$
Il existe alors un ensemble $\Sigma$ de places ultram\'etriques de $K$ de densit\'e $>0$ 
tel que, si $v\in\Sigma$ alors 
\begin{enumerate}
\item[$\bullet$] $k_v$ est un corps premier de caract\'eristique $p_v\neq\ell$,
\item[$\bullet$] $\rho$ est non ramif\'ee en $v$,
\item[$\bullet$] les valeurs propres du
\og Frobenius g\'eom\'etrique\fg~$F_{v,\rho}^{-1}$ sont des entiers alg\'ebriques et l'une d'elles
a une valuation $p_v$-adique
nulle (o\`u la valuation est normalis\'ee par la condition que la valuation $p_v$-adique de $p_v$ est 1). 
\end{enumerate}

On peut \'egalement supposer la densit\'e de $\Sigma$ \'egale \`a 1 quitte \`a remplacer $K$ par une extension finie $L$
de $K$
convenable.
\end{lemm}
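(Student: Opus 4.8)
The plan is to follow the method of Ogus and Bogomolov--Zarhin, transporting the question about $p_v$-adic valuations of Frobenius eigenvalues into a statement about the image of the $\ell$-adic representation to which an equidistribution theorem applies. First I would set $V:=\Het^2(X\otimes\bar K,\bQ_\ell)(1)$ and let $\bar\rho$ be the Tate twist of $\rho$. The cup-product, composed with Poincar\'e duality and the square of the hyperplane class, endows $V$ with a nondegenerate symmetric bilinear form preserved by $\bar\rho$, so the image $G$ of $\bar\rho$ lies in the orthogonal group $O(V)$. As we assume the Tate conjecture together with $\rang(\NS(X\otimes\bar K))<\dim_{\bQ_\ell}\Het^2(X\otimes\bar K,\bQ_\ell)$, Lemma \ref{lemm:representationnontriviale} yields $\dim G\ge 1$, so $G$ is an infinite compact $\ell$-adic Lie group. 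The semisimplicity conjecture furnishes a finite set of places, into which I absorb $\cV$, outside of which $\rho$ is unramified and $F_{v,\rho}$ is semisimple; thus the image $M_v$ of the geometric Frobenius $F_{v,\rho}^{-1}$ is a semisimple element of $G\subseteq O(V)$, with eigenvalues of absolute value $1$ occurring in pairs $\mu,\mu^{-1}$.

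Next I would reduce the slope condition to a trace congruence. The requirements that $k_v$ be a prime field, that $p_v\neq\ell$, and that $\rho$ be unramified at $v$ each hold for a set of density $1$, so only the existence of a slope-$0$ eigenvalue is at stake. For such $v$ the polynomial $Q_v(t)=\det(t-F_{v,\rho}^{-1}\mid\Het^2)$ lies in $\bZ[t]$ (Deligne), its roots $\alpha_i=p_v\mu_i$ are Weil integers of absolute value $p_v$, and the valuations $v_{p_v}(\alpha_i)$ are the slopes of the crystalline Frobenius of $\cX_v$. Because $\dim_k\H^0(X,\Omega_X^2)=1$, the Hodge polygon of $\Het^2$ opens with a slope-$0$ segment of length $1$; by the Mazur--Ogus theorem recalled in the proof of Lemma \ref{lemm:scindageversusfrobenius}, the Newton polygon lies above it, so the slope-$0$ multiplicity is $0$ or $1$. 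A short computation then shows that, $k_v$ being prime, a slope-$0$ eigenvalue exists if and only if $p_v\nmid\Tr(F_{v,\rho}^{-1}\mid\Het^2)$: if some $\alpha_i$ is a unit it is the only one, whence $\Tr\equiv\alpha_i\not\equiv 0$; if every slope is positive then $v_{p_v}(\Tr)>0$ and, $\Tr$ being a rational integer, $p_v\mid\Tr$. It therefore suffices to produce a positive-density set of $v$ with $p_v\nmid\Tr_v$, where $\Tr_v:=\Tr(F_{v,\rho}^{-1}\mid\Het^2)$.

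I would then apply Serre's \v{C}ebotarev-type equidistribution theorem: the semisimple classes $M_v$ equidistribute, with respect to a measure of full support, in the space of conjugacy classes of $G$. The hypothesis $\ell>2b_2$ enters here through a Minkowski--Serre bound on torsion, which identifies the Zariski closure of $G$ with a full orthogonal (or special orthogonal) group on the transcendental part and pins down the equidistribution measure. The step I expect to be the main obstacle is that the divisibility $p_v\mid\Tr_v$ is a condition modulo the \emph{varying} prime $p_v$ and is not, a priori, a function of the $\ell$-adic conjugacy class of $M_v$; indeed, a non-supersingular reduction need not be ordinary, so one cannot simply invoke Kronecker's theorem to force the extreme slope to be attained. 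Resolving this is the heart of the Ogus--Bogomolov--Zarhin argument: one must show that the non-ordinary locus, cut out by $p_v\mid\Tr_v$, corresponds to an $\ell$-independent constraint incompatible with a positive-dimensional image, so that the ordinary $v$ form a conjugation-invariant set of positive measure. By Serre's theorem this set meets the Frobenii with positive density, producing $\Sigma$.

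Finally, to obtain density $1$, I would replace $K$ by a finite extension $L$ such that $\Gal(\bar K/L)$ surjects onto the identity component of $G$; then $G$ becomes connected, the ordinary locus acquires full measure, and $\Sigma$ has density $1$. Throughout, the integrality of the $\alpha_i$ (Deligne) and the comparison between crystalline and $\ell$-adic eigenvalues via Katz--Messing guarantee that the slope-$0$ eigenvalue furnished on the $\ell$-adic side is genuinely a unit-root of the crystalline Frobenius, which is what feeds into the Hasse--Witt statement.
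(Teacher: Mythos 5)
Your outline correctly reduces the problem to producing a positive-density set of places $v$ of prime residue field, with $F_{v,\rho}$ semisimple, such that $p_v\nmid\Tr(F_{v,\rho}^{-1})$, and you correctly identify the difficulty: this divisibility is a condition modulo the \emph{varying} prime $p_v$ and is not visibly a function of the $\ell$-adic conjugacy class of Frobenius. But at exactly that point you stop. You write that one \emph{must show} that the locus $p_v\mid\Tr_v$ corresponds to an $\ell$-independent constraint incompatible with a positive-dimensional image --- that is the entire content of the lemma, and leaving it as the unproved ``heart of the Ogus--Bogomolov--Zarhin argument'' is a genuine gap. The paper's resolution is concrete and elementary: choose a finite Galois extension $L/K$ containing the $\ell$-th roots of unity and trivializing the mod-$\ell$ reduction of $\rho$, and restrict to places $v\notin\cV$ of prime residue field that split completely in $L$ (a set of density $>0$ by \v{C}ebotarev). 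For such $v$ one has $p_v\equiv 1\pmod{\ell}$ and $\Tr(F_{v,\rho}^{-1})\equiv b_2\pmod{\ell}$. If $p_v\mid\Tr(F_{v,\rho}^{-1})$, write $\Tr(F_{v,\rho}^{-1})=p_v t'$ with $t'\in\bZ$; then $t'\equiv b_2\pmod{\ell}$, while the Weil bounds give $|t'|\le b_2$, so $\ell>2b_2$ forces $t'=b_2$; the equality case of the Weil bounds then forces all eigenvalues to equal $p_v$, so $p_v^{-1}F_{v,\rho}^{-1}$ is unipotent, hence equal to $\Id$ by semisimplicity, i.e.\ $F_{v,\rho\otimes\chi_\ell}=\Id$. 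By Serre's Th\'eor\`eme 10 of \cite{serre81} --- applicable because Lemma \ref{lemm:representationnontriviale} makes the image of $\rho\otimes\chi_\ell$ infinite --- the set of $v$ where the Frobenius equals the single fixed class $\Id$ has density $0$. Note that this is where $\ell>2b_2$ enters (in the congruence $t'\equiv b_2$, $|t'|\le b_2$), not, as you suggest, through a Minkowski--Serre identification of the Zariski closure of the image with a full orthogonal group; and no equidistribution beyond the density-$0$ statement for a single class is needed.

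Two smaller points. Your appeal to $\dim\H^0(X,\Omega_X^2)=1$ and to the Mazur--Ogus comparison of Newton and Hodge polygons imports hypotheses this lemma does not make (it is stated for an arbitrary smooth projective geometrically integral $X$ satisfying the Tate and semisimplicity conjectures); fortunately only the easy direction --- all slopes positive implies $p_v\mid\Tr(F_{v,\rho}^{-1})$, since the trace is a rational integer --- is used, and that needs nothing about Hodge numbers. For the density-$1$ refinement, the paper simply replaces $K$ by the extension $L$ above: over $L$ the places of prime residue field already have density $1$ and the congruence argument applies to all of them outside $\cV$; no connectedness of the image of $\rho$ is invoked.
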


\begin{proof}
On note $\chi_{\ell} : \Gal(\bar K/K) \to \bZ_\ell^*$ le caract\`ere cyclotomique $\ell$-adique~; l'action
de $\Gal(\bar K/K)$ sur le module de Tate $\ell$-adique $\bZ_\ell(1)$ est induite par $\chi_\ell$. 
On rappelle que si $v$
est une place de $K$ qui n'est pas divisible par $\ell$ alors $\chi_\ell$
est non ramifi\'e en $v$ et $\chi_\ell(F_v)=p_v$.
On consid\`ere les repr\'esentations
$$\rho : \Gal(\bar K/K) \lra \Gl(\Het^2(X\otimes \bar K,\bZ_\ell))
\subset \Gl(\Het^2(X\otimes \bar K,\bQ_\ell))$$
et 
$$\rho\otimes\chi_\ell : \Gal(\bar K/K) \lra \Gl(\Het^2(X\otimes \bar K,\bZ_\ell)(1))
\subset \Gl(\Het^2(X\otimes \bar K,\bQ_\ell)(1)).$$

On consid\`ere un ensemble $\cV$ de places ultram\'etriques de $K$ tel que, si $v$ est une place finie de
$K$ et $v\not\in \cV$ alors $\rho$ est non ramifi\'ee en $v$ et 
$F_{v,\rho}$ est semi-simple.

On peut toujours supposer, quitte \`a \'elargir $\cV$, que si $v$ est une place finie de
$K$ et $v\not\in\cV$ alors $p_v\neq\ell$. On peut \'egalement supposer, d'apr\`es la discussion ci-dessus, que
les valeurs propres de $F_{v,\rho}^{-1}$
sont des entiers alg\'ebriques.

On consid\`ere une extension galoisienne finie $L$ de $K$ contenant toutes les racines $\ell$-i\`eme de l'unit\'e
telle que la
repr\'esentation induite par $\rho$
$$ \Gal(\bar K/L) \lra \Gl(\Het^2(X\otimes \bar K,\bZ_\ell)\otimes (\bZ_\ell/\ell \bZ_\ell))$$
soit triviale.

On sait d'apr\`es le th\'eor\`eme de densit\'e de \v{C}ebotarev que l'ensemble $\Sigma_1$ des places finies de $K$
de corps r\'esiduels premiers et
totalement d\'ecompos\'ees dans $L$ est de densit\'e $>0$.
On sait aussi, d'apr\'es \cite[Th\'eor\`eme 10]{serre81} et le lemme \ref{lemm:representationnontriviale}, que
l'ensemble $\Sigma_2$ des places finies $v$ de $K$ hors de $\cV$ (la repr\'esentation $\rho\otimes\chi_\ell$ est non
ramifi\'ee
en dehors de $\cV$ puisque $\rho$ l'est) telles que 
$F_{v,\rho\otimes\chi_\ell}\neq \Id_{\Het^2(X\otimes \bar K,\bQ_\ell)(1)}$ ou encore 
$F_{v,\rho}\neq p_v^{-1}\Id_{\Het^2(X\otimes \bar K,\bQ_\ell)}$ a une densit\'e \'egale \`a 1.

On pose $\Sigma:=\Sigma_1\cap\Sigma_2$~; c'est un ensemble de places ultram\'etriques de $K$ (en lesquelles $\rho$ est
non ramifi\'ee) de densit\'e $>0$ et disjoint de $\cV$.

Il reste, pour terminer la d\'emonstration du lemme, \`a voir que si $v\in\Sigma$ alors l'une des valeurs
propres de
$F_{v,\rho}$ a une valuation $p_v$-adique
nulle. 

Soit $v\in\Sigma$. On suppose, par l'absurde, que toutes les valeurs propres du 
\og Frobenius g\'eom\'etrique\fg~$F_{v,\rho}^{-1}$ ont une valuation $p_v$-adique
(enti\`ere puisque $k_v$ est un corps premier) $>0$.
On a donc $p_v\,|\,\Tr(F_{v,\rho}^{-1})$ (dans $\bZ$).
On en d\'eduit que $\frac{1}{p_v}F_{v,\rho}^{-1}$ est unipotent
d'apr\`es \cite[VI. Proposition 2.7]{deligneLN900}. On va donner l'argument parce qu'il explique l'hypoth\`ese $\ell> 2
b_2$ et le r\^ole de l'extension $L$. On
consid\`ere $w$ une place finie de $L$ au-dessus de $v$. On note $t_w$ la trace de
$F_{w,\rho}^{-1}$. 
On note que, puisque $a_w=1$, $F_{w,\rho}=F_{v,\rho}$. On a $t_w=p_w t'_w$ avec $t'_w\in\bZ$ par
hypoth\`ese.
On a bien s\^ur $t_w=\sum_{1\le i\le b_2}\alpha_i$ o\`u les $(\alpha_i)_{1\le i\le b_2}$ sont les valeurs propres
de $F_{w,\rho}^{-1}$. On rappelle que $\alpha_i$ est un entier alg\'ebrique de module $p_w^{a_w}=p_v$
(\cite{deligne:weil1}) et que $t_w\in\bZ$ (\textit{loc. cit}).
On a alors d'une part 
$t_w\equiv b_2\text{ modulo }\ell$ puisque la repr\'esentation $\Gal(\bar K/L) \lra \Gl(\Het^2(X\otimes \bar
K,\bZ_\ell)\otimes (\bZ_\ell/\ell \bZ_\ell))$ est triviale et $p_w-1\equiv 0\text{ modulo }\ell$ puisque $k_w$ est un
corps premier et $L$ contient un racine primitive $\ell$-i\`eme de l'unit\'e.
On a donc
$t'_w\equiv b_2\text{ modulo }\ell$ et $|t'_w|\le b_2$. On en d\'eduit que $t'_w=b_2$
puisque $\ell > 2 b_2$ (et $t'_w\in\bZ$) puis que $t_w=p_w b_2$ et finalement que $\alpha_i=\alpha_j$ pour tout 
$i, j$ dans $\{1,\ldots,b_2 \}$.
On a donc montrer que
$\frac{1}{p_w}F_{w,\rho}^{-1}=\frac{1}{p_v}F_{v,\rho}^{-1}$ est unipotent.
On en d\'eduit que
$F_{v,\rho}=\frac{1}{p_v}\Id_{\Het^2(X\otimes \bar K,\bQ_\ell)}$ puisque, par hypoth\`ese,
$F_{v,\rho}$ est semi-simple. On a
ainsi obtenu la
contradiction souhait\'ee.

On obtient la deuxi\`eme assertion du lemme en rempla\c cant $K$ par $L$~; en effet, d'apr\`es le
th\'eor\`eme de densit\'e de \v{C}ebotarev, l'ensemble des places finies de $L$ dont le corps r\'esiduel est premier
a une densit\'e 1.
\end{proof}

\section{La construction de Kuga, Satake et Deligne}

On \'etend dans ce paragraphe un r\'esultat de Deligne (\cite{deligne72}) utilisant une construction due \`a
Kuga et Satake (\cite{kugasat67})~;
Deligne explique dans \textit{loc. cit.} comment la structure de Hodge $\Hb^2(X_0,\bC)$ d'une surface $K 3$ $X_{0}$
s'exprime \`a l'aide des vari\'et\'es ab\'eliennes. On va expliquer que c'est encore le cas lorsque $X_{0}$ est une
vari\'et\'e symplectique irr\'eductible. Le r\'esultat avait d\'ej\`a \'et\'e observ\'e (de fa\c con ind\'ependante) par
d'autres auteurs (voir par exemple \cite{andre96}).

\medskip

On commence par expliquer ce qui doit \^etre modifi\'e dans \cite{deligne72}.

\medskip

Soit $(X_{0},\eta_{0})$ une vari\'et\'e symplectique irr\'eductible polaris\'ee sur $\bC$
($\eta_{0}\in\Pic(X_{0})$ est la classe d'un faisceau inversible $L_{0}$ ample sur $X_{0}$).
On pose $b_2=b_2(X_0)$ et $d=\dim(X_0)$.

On note $(\hat f : \hat X \to\hat S,\hat L)$ une d\'eformation universelle de $(X_{0},L_{0})$~; 
$\hat S$ est le spectre d'un anneau de series formelles de dimension $b_2-3$ (voir par exemple
\cite[1.15]{huybrechts99}).
On a suppos\'e $L_{0}$ ample sur $X_{0}$ et $\hat f$ est donc alg\'ebrisable. On a donc
un diagramme cart\'esien

$$
\centerline{
\xymatrix{
\hat X \ar[r] \ar[d]^{\hat f} &  X\ar[d]^{f} \\
\hat S  \ar[r] & S 
}
}
$$

\noindent o\`u $S$ est un sch\'ema de type fini sur $\bC$ et $X$ est une famille de vari\'et\'es symplectiques
irr\'eductibles sur
$S$. On a aussi un faisceau inversible $L$ sur $X$, ample sur
$S$, induisant $\hat L$ sur $\hat X$. On peut m\^eme, gr\^ace au
th\'eor\`eme d'approximation d'Artin, trouver $S$ tel que $\hat S$ soit le compl\'et\'e
formel de $S$ en l'image $0$ du point ferm\'e $\hat 0$ de $\hat S$.

On note $\eta\in\Gamma(S,\Rb^2f_*\bZ)$ la classe de $L$. On d\'esigne par $\Pb^2f_*\bZ$ le noyau de
l'application
$\bZ$-lin\'eaire

\begin{eqnarray*}
\Rb^2f_*\bZ & \lra & \Rb^{2d}f_*\bZ \\
\alpha & \mapsto & \alpha\cdot\eta^{d-1}
\end{eqnarray*}

\noindent et on note 

$$\psi : \Pb^2f_*\bZ(1)\otimes \Pb^2f_*\bZ(1)\lra \underline{\bZ}$$

\noindent l'application induite par

\begin{eqnarray*}
\Rb^2f_*\bZ(1) \otimes \Rb^2f_*\bZ(1) & \lra &  \Rb^{2d}f_*\bZ(d) \simeq \underline{\bZ}\\
\alpha\otimes\beta   & \longmapsto & \alpha\cdot\beta\cdot\eta^{d-2}.
\end{eqnarray*}

On sait (voir \cite[Corollaire 1]{beauville83}) et c'est essentiel pour faire fonctionner l'argument de Deligne
que la variation de structures de Hodge polaris\'ees $(\Pb^2f_*\bZ(1),\psi)$ est une d\'eformation universelle de 
$(\Pb^2(X_{0},\bZ)(1),\psi_{0})$. On sait \'egalement que la forme
quadratique r\'eelle d\'eduite de $\psi_{0}$ est de signature $(2,b_2-3)$ (voir par exemple \cite{weil58}).

On fixe maintenant un nombre premier $\ell$. On peut bien s\^ur remplacer l'anneau des coefficients $\bZ$ par
$\bZ_\ell$ ci-dessus. On peut aussi consid\'erer la cohomologie $\ell$-adique. On obtient un faisceau lisse
$\Pet^2f_*\bZ_\ell(1)$ sur $S$ et une forme bilin\'eaire
$$\psi_\ell : \Pet^2f_*\bZ_\ell(1)\otimes \Pet^2f_*\bZ_\ell(1)\lra \underline{\bZ}_\ell,$$
qui, via l'isomorphisme de comparaison $\Pet^2f_*\bZ_\ell(1) \simeq \Pb^2f_*\bZ_\ell(1)$
(voir \cite[Expos\'e XI]{sga4}),
se d\'eduit de $\psi$ par extension des scalaires.

Soit $V$ un module libre sur un anneau $A$, muni d'une forme quadratique $Q$. On note
$C(V,Q)$ l'alg\`ebre de Clifford associ\'ee \`a $(V,Q)$ et $C^{+}(V,Q)$ sa partie paire.

On obtient le r\'esultat suivant en reprenant mot pour mot la d\'emonstration de 
\cite[Proposition 6.5]{deligne72}.

\begin{prop}\label{prop:deligne}
Soit $X_{0}$ une vari\'et\'e symplectique irr\'eductible polaris\'ee sur un corps $K\subset \bC$.
On pose $C:=C^{+}(\Pb^2(X_{0}\otimes\bC,\bZ)(1),\psi_{0})$.

\begin{enumerate}
\item Il existe~:
\begin{enumerate}
\item un sch\'ema $S$ de type fini sur $\bQ$, une famille $f : X \to S$ de vari\'et\'es symplectiques irr\'eductibles
polaris\'ees, param\'etr\'ee par $S$ et une extension finie $L$ de $K$ et un point $s$ de $S$ d\'efini sur $L$ tel que
$X_s$ soit isomorphe \`a $X_{0}\otimes L$~;
\item un sch\'ema ab\'elien $a : A \to S$ et $\mu : C \to \End_C(A)$~;
\item un isomorphisme de $\bZ_\ell$-faisceaux d'alg\`ebres sur $S$
$$u : C^{+}(\Pet^2f_*\bZ_\ell(1),\psi_\ell)\simeq\underline{\End}_C(\Ret^1a_*\bZ_\ell).$$
\end{enumerate}
\item On suppose $K\subset \bar \bQ$. Il existe alors un ensemble fini $\cV$ de places ultram\'etriques de
$L$
tel que $A_s$ ait potentiellement bonne
r\'eduction en toutes
les places finies de $L$ n'appartenant pas \`a l'ensemble $\cV$.
\end{enumerate}
\end{prop}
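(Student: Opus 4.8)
The plan is to transport Deligne's Kuga--Satake construction essentially verbatim, after isolating the two Hodge-theoretic inputs it actually uses and checking that both hold here. These inputs are: first, the primitive Hodge structure $\Pb^2(X_0\otimes\bC,\bZ)(1)$ is of K3 type, i.e. $h^{2,0}=1$, which is immediate since $X_0$ is symplectique irr\'eductible (so $\dim\H^0(X_0,\Omega^2_{X_0})=1$, and the holomorphic $2$-form is primitive); and second, the real quadratic form attached to $\psi_0$ has signature $(2,b_2-3)$, as recalled above after Weil. Under exactly these two conditions the even Clifford algebra $C=C^+(\Pb^2(X_0\otimes\bC,\bZ)(1),\psi_0)$ acquires, at each point of the period domain $D^\pm=\{[\omega]:\psi_0(\omega,\omega)=0,\ \psi_0(\omega,\bar\omega)>0\}$, a weight-one Hodge structure whose associated complex torus is an abelian variety.

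First I would perform the Kuga--Satake construction in families over $D^\pm$, equivalently over the orthogonal Shimura variety attached to $\textup{SO}(2,b_2-3)$. The spin representation furnishes an embedding $\textup{GSpin}(V)\hookrightarrow\textup{GSp}(C^+)$, and pulling back the universal abelian scheme from the Siegel space yields a family of abelian varieties with $\Ret^1a_*\bZ$ identified (as a $C$-module) with $C^+$, on which $C$ acts by right multiplication; this gives $\mu:C\to\End_C(A)$. The isomorphism $u$ then encodes the tautological identification $\End_{C^+}(C^+)\simeq C^+$, left multiplications being exactly the endomorphisms commuting with the right $C$-action; it passes to the $\ell$-adic sheaves through the Betti--\'etale comparison of \cite[Expos\'e XI]{sga4} and is automatically monodromy-equivariant since every step is functorial. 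The point $s\in S(L)$ with $X_s\simeq X_0\otimes L$ exists after a finite extension $L/K$, because the polarized $X_0$ defines a point of the relevant moduli space rational over some such $L$ (here one invokes that $(\Pb^2f_*\bZ(1),\psi)$ is a universal deformation, recalled above after Beauville).

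For part (2) I would read $u$ at the level of $\ell$-adic representations of $\Gal(\bar K/L)$:
$$C^+(\Pet^2(X_0\otimes\bar K,\bQ_\ell)(1),\psi_\ell)\simeq\End_C\big(\Het^1(A_s\otimes\bar K,\bQ_\ell)\big).$$
Since $X_0\otimes L$ extends to a smooth proper family over an open subscheme of $\Spec(\cO_L)$, the representation on $\Pet^2(1)$ is unramified outside a finite set $\cV$. For $v\notin\cV$ the inertia group $I_v$ then acts trivially on $C^+(\Pet^2(1))\simeq\End_C(\Het^1(A_s))$; as the $C$-action is algebraic, $I_v$ sits inside $\textup{Aut}_C(\Het^1(A_s))$ and must act through the centre of $\End_C(\Het^1(A_s))$, i.e. through the similitude factor $\mathbb{G}_m=\ker(\textup{GSpin}\to\textup{SO})$. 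This central action is governed by a power of the cyclotomic character, hence unramified at $v\nmid\ell$; enlarging $\cV$ to contain the places above $\ell$, one finds that $I_v$ preserves the lattice and the Weil pairing and so acts through a finite quotient. By the criterion of N\'eron--Ogg--Shafarevich, $A_s$ has potential good reduction at every $v\notin\cV$.

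The main obstacle lies entirely in part (1): guaranteeing that the abelian scheme $a:A\to S$, the action $\mu$, and the sheaf isomorphism $u$ descend from $\bC$ to $\bQ$, rather than living only analytically over the period domain. This is exactly Deligne's descent via the canonical model of the orthogonal Shimura variety, whose reflex field is $\bQ$ precisely because the signature is $(2,b_2-3)$; his argument transfers unchanged here since it never uses $\dim X_0=2$, only the two inputs isolated at the outset.
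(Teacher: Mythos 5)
Your proposal follows essentially the same route as the paper, which obtains the proposition by transporting the proof of \cite[Proposition 6.5]{deligne72} \og mot pour mot\fg, the only inputs requiring verification being exactly the two you isolate: that $(\Pb^2f_*\bZ(1),\psi)$ is a universal deformation of the K3-type Hodge structure $(\Pb^2(X_{0},\bZ)(1),\psi_{0})$ (Beauville) and that the real quadratic form attached to $\psi_{0}$ has signature $(2,b_2-3)$. Your expanded sketch of the Kuga--Satake construction, of the descent to $\bQ$, and of the N\'eron--Ogg--Shafarevich argument for part (2) is consistent with Deligne's argument as invoked by the paper, so there is nothing to object to.
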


\begin{rema}\label{rema:deligne}
On peut pr\'eciser un peu la derni\`ere assertion de la proposition pr\'ec\'edente.
Soit $\cX \to \cS$ un mod\`ele entier de 
$f : X \to S$~; $\cS$ est un sch\'ema int\`egre de type fini sur $\bZ$, $S$ est la fibre g\'en\'erique de
$\cS\to\Spec(\bZ)$ et $X$ s'identifie \`a la fibre g\'en\'erique de $\cX \to \cS$ comme $S$-sch\'ema.
Il existe un ensemble fini $\cV$ de places ultram\'etriques de $L$ et un morphisme
$\Spec(\cO_L[\cV^{-1}]) \to \cS$ qui rel\`eve $\Spec(L) \to S \subset \cS$.
On peut aussi supposer $\cX\times_\cS \Spec(\cO_L[\cV^{-1}])$ lisse sur $\Spec(\cO_L[\cV^{-1}])$~; $\cV$
convient.
\end{rema}

\section{Sur un corps de nombres}

On consid\`ere un corps de
nombres $K$ et on fixe une cl\^oture alg\'ebrique $\bar K$ de $K$. Soit $\ell$ un nombre premier.

\medskip

Soit $X_{0}$ une vari\'et\'e symplectique irr\'eductible polarisable sur $K$.
On rappelle que le groupe de Galois $\Gal(\bar K/K)$ agit sur $\Het^2(X_0\otimes\bar K,\bQ_\ell)$ 
par transport de structures. On \'etudie dans ce paragraphe la repr\'esentation $\ell$-adique $\Gal(\bar K/K) \to
\Gl(\Het^2(X_0\otimes\bar K,\bQ_\ell))$.

\begin{theo}\label{theo:semisimplicite}
Soit $X_{0}$ une vari\'et\'e symplectique irr\'eductible polaris\'ee d\'efinie sur $K$. 
On suppose ou bien $\textup{rang}(\NS(X_0\otimes\bar K)) \ge 2$
ou bien $\dim_{\bQ_\ell}(\Het^2(X_0\otimes\bar K,\bQ_\ell))$ pair.
Il existe alors un ensemble fini $\cV$ de places ultram\'etriques de $K$ tels que, si $v$ est une place finie de
$K$ et $v\not\in\cV$ alors $\rho$ est non ramifi\'ee en $v$ et 
$F_{v,\rho}$ est semi-simple.
\end{theo}
\begin{proof}
Soit $\eta_0\in\Het^2(X_{0}\otimes \bar K,\bQ_\ell)(1)$ une polarisation de $X_{0}$ d\'efinie sur $K$. On a
une d\'ecomposition en somme directe
$$\Het^2(X_{0}\otimes \bar K,\bQ_\ell)(1)=\bQ_\ell\cdot\eta_0\oplus \Pet^2(X_{0}\otimes \bar K,\bQ_\ell)(1)$$
de $\Gal(\bar K/K)$-modules et il suffit donc de montrer l'assertion correspondante pour le $\Gal(\bar K/K)$-module
$\Pet^2(X_{0}\otimes \bar K,\bQ_\ell)(1)$. 

On consid\`ere $(S,X,f,L,A,a,\mu,u,\cV_L,s)$ comme
dans la proposition \ref{prop:deligne}. Soient $\cX \to \cS$ un
mod\`ele entier de 
$f : X \to S$.
On suppose comme dans la 
remarque \ref{rema:deligne}
qu'il existe un morphisme $\Spec(\cO_L[\cV_L^{-1}]) \to \cS$ relevant
$\Spec(L) \to S \subset \cS$ tel que
$\cX\times_\cS \Spec(\cO_L[\cV_L^{-1}])$ soit lisse sur $\Spec(\cO_L[\cV_L^{-1}])$.

On peut toujours supposer que $\cV_L$ contient toutes les places finies $v$ de
$L$ dont le corps r\'esiduel est de caract\'eristique $\ell$. 

On note $\cV$ l'ensemble des places finies de $K$ qui divisent l'une des places de $\cV_L$.
On note $\rho$ la repr\'esentation $\Gal(\bar K/K) \to \Gl(\Het^2(X_0\otimes\bar K,\bQ_\ell))$. On peut toujours
supposer, quitte \`a \'elargir $\cV$, que $\rho$ est non ramifi\'ee en $v\not\in\cV$.

On fixe une place finie $v$ de $K$ telle que $v\not\in\cV$ et $w$ une place finie de $L$ au-dessus de $v$, de sorte que
$w\not\in\cV_L$. Il suffit de montrer que $F_{w,\rho}$ est
semi-simple. On sait en effet qu'une puissance convenable de $F_{v,\rho}$ est un \'el\'ement de Frobenius en $w$ 
donc conjugu\'ee \`a $F_{w,\rho}$. On d\'eduit l'assertion cherch\'ee du lemme \ref{lemm:semisimplicite}.
On peut donc \'egalement supposer, quitte \`a remplacer $K$ par $L$, que $s$ est d\'efini sur $K$.

On sait, d'apr\`es le th\'eor\`eme de changement de base lisse 
(\cite[Expos\'e XVI]{sga4}), qu'on a un isomorphisme 
$$\Pet^2(X_s\otimes \bar K,\bQ_\ell)\simeq \Pet^2(\cX_v\otimes\bar{k}_v,\bQ_\ell)$$
par lequel l'action de $F_{v,\rho}^{-1}$ sur $\Pet^2(X_0\otimes \bar K,\bQ_\ell)$ 
correspond (\`a conjugaison pr\`es) \`a l'action de $\Fabs^{a_v}\times \Id_{\bar{k}_v}$ sur 
$\Pet^2(\cX_v\otimes\bar{k}_v,\bQ_\ell)$ (voir \cite[Expos\'e XV]{sga5}).
On sait aussi que, par choix de $v$, $A_s$ a potentiellement bonne r\'eduction en $v$.
Quitte \`a remplacer $K$ par une extension finie et $F_{v,\rho}$ par une puissance convenable de lui-m\^eme,
on peut donc supposer, en tenant compte \`a nouveau du
lemme \ref{lemm:semisimplicite}, que $A_s$ a bonne 
r\'eduction en $v$. On note $A_v$ sa r\'eduction en $v$.

On pose \`a nouveau $C:=C^{+}(\Pb^2(X_{0}\otimes\bC,\bZ)(1),\psi_{0})$.
On sait d'apr\`es la proposition \ref{prop:deligne} qu'on a
un isomorphisme de $\Gal(\bar K/K)$-modules
$$u\otimes {\bar K} : C^{+}(\Pet^2(X_s\otimes \bar K,\bZ_\ell)(1),\psi_\ell\otimes\bar K)\simeq 
\End_C(\Het^1(A_s\otimes \bar K,\bZ_\ell))$$
qui se r\'eduit \`a un isomorphisme de $\Gal(\bar k_v/k_v)$-modules
\begin{eqnarray}\label{isomorphisme:motif2}
u\otimes {\bar k_v} : C^{+}(\Pet^2(\cX_v\otimes\bar k_v,\bZ_\ell)(1),\psi_\ell\otimes\bar k_v)
\simeq \End_C(\Het^1(A_v\otimes\bar k_v,\bZ_\ell)).
\end{eqnarray}

\medskip

On suppose $\textup{rang}(\NS(X_0\otimes\bar K)) \ge 2$. On peut trouver
une classe alg\'ebrique
$\beta_0\in \Pet^2(X_{0}\otimes \bar K,\bQ_\ell)(1)$ lin\'eairement ind\'ependante de $\eta_0$ sur $\bQ_\ell$
telle que $(\psi_\ell\otimes\bar K)(\beta_0)\neq 0$. 
On peut toujours supposer $\beta_0\in \Pet^2(X_{0}\otimes \bar K,\bQ_\ell)(1)^{\Gal(\bar K/K)}$
quitte \`a remplacer $K$ par une extension finie.
On note $\beta_v\in\Pet^2(\cX_v\otimes\bar{k}_v,\bQ_\ell)(1)$ l'image de 
$\beta_0\in \Pet^2(X_s\otimes \bar K,\bQ_\ell)(1)$. On note au passage que $\beta_v^{\perp}$
(resp. $\beta_0^{\perp}$) est invariant sous $\Gal(\bar k_v/k_v)$ (resp. $\Gal(\bar K/K)$) puisque
$\bQ_\ell\cdot\beta_v^{\perp}$ (resp. $\bQ_\ell\cdot\beta_0^{\perp}$) l'est.

On rappelle que par d\'efinition, on a un isomorphisme de $\Gal(\bar k_v/k_v)$-modules
\begin{eqnarray}\label{isomorphisme:C}
C^{+}(\Pet^2(\cX_v\otimes\bar k_v,\bQ_\ell)(1),\psi_\ell\otimes\bar k_v)\simeq
\bigoplus_{i \ge 0}\bigwedge^{2i}(\Pet^2(\cX_v\otimes\bar k_v,\bQ_\ell)(1)).
\end{eqnarray}
On consid\`ere alors l'application
$\bQ_\ell$-lin\'eaire de
$\Gal(\bar k_v/k_v)$-modules
\begin{eqnarray*}
\Pet^2(\cX_v\otimes\bar k_v,\bQ_\ell)(1)\supset \beta_v^{\perp} & \hookrightarrow & \End_C(\Het^1(A_v\otimes\bar
k_v,\bQ_\ell)) \\
\nonumber \alpha & \mapsto & (u\otimes\bar k_v)(\alpha\wedge\beta_v)
\end{eqnarray*}
induite par (\ref{isomorphisme:motif2}).

On sait bien que l'action de $\Fabs^{a_v}\times \Id_{\bar{k}_v}$ sur 
$\Het^1(A_v\otimes\bar k_v,\bQ_\ell)$ est semi-simple
(voir par exemple \cite[VI. Lemma 2.10]{deligneLN900}). On en d\'eduit que 
$\Fabs^{a_v}\times \Id_{\bar{k}_v}$ agissant sur $\beta_v^{\perp}$ est semi-simple puis
que la restriction de $F_{v,\rho}$ \`a
$\beta_0^{\perp}$ l'est aussi. Or
$(\psi_\ell\otimes\bar K)(\beta_0)\neq 0$ et on a donc une d\'ecomposition en somme directe 
de $\Gal(\bar K/K)$-modules
$\Pet^2(X_s\otimes \bar K,\bQ_\ell)(1)=\bQ_\ell\cdot\beta_0\oplus\beta_0^{\perp}$. On conclut facilement dans ce cas.

\medskip

On suppose maintenant $\dim_{\bQ_\ell}(\Het^2(X_0\otimes\bar K,\bQ_\ell))=2r+2$
pour un entier $r\ge 1$. On a un isomorphisme de
$\Gal(\bar k_v/k_v)$-modules
\begin{eqnarray}\label{isomorphisme:wedge}
\bigwedge^{2r}\Pet^2(\cX_v\otimes\bar k_v,\bQ_\ell)
\simeq
\textup{Hom}_{\bQ_\ell}
(\Pet^2(\cX_v\otimes\bar k_v,\bQ_\ell),\bigwedge^{2r+1}\Pet^2(\cX_v\otimes\bar k_v,\bQ_\ell)).
\end{eqnarray}

On d\'eduit de (\ref{isomorphisme:motif2}), (\ref{isomorphisme:C}) et (\ref{isomorphisme:wedge}) 
que le $\Gal(\bar k_v/k_v)$-module
$\Pet^2(\cX_v\otimes\bar k_v,\bQ_\ell)(1)$ est un facteur direct du
$\Gal(\bar k_v/k_v)$-module
$\textup{Hom}_{\bQ_\ell}(\End(\Het^1(A_v\otimes\bar k_v,\bQ_\ell)),\bQ_\ell))$
puisqu'on a un isomorphisme de $\Gal(\bar k_v/k_v)$-modules
$\bigwedge^{2r+1}\Pet^2(\cX_v\otimes\bar k_v,\bQ_\ell)(2r+1)\simeq\bQ_\ell$.
On sait par ailleurs que
l'action de $\Fabs^{a_v}\times \Id_{\bar{k}_v}$ sur 
$\Het^1(A_v\otimes\bar k_v,\bQ_\ell)$ est semi-simple et celle de $F_{v,\rho}$ sur
$\Het^2(X_{s}\otimes \bar K,\bQ_\ell)$ l'est \'egalement.
Ceci termine la d\'emonstration du th\'eor\`eme.
\end{proof}

\begin{lemm}\label{lemm:semisimplicite}
Soit $V$ un espace vectoriel de dimension finie sur corps $K$ de caract\'eristique $0$ et soit $u$ un endomorphisme de
$V$. Soit $r$ un entier $\ge 1$. Si $u$ est inversible et $u^r$ est
semi-simple alors $u$ est \'egalement semi-simple.
\end{lemm}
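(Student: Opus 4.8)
The plan is to reduce to the case where $K$ is algebraically closed and then to examine the action of $u$ on each generalized eigenspace, where the two hypotheses (invertibility of $u$ and characteristic $0$) combine to kill the nilpotent part.

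First I would note that the semisimplicity of an endomorphism is insensitive to extension of scalars over a perfect field. Indeed, the minimal polynomial is unchanged under the base change $V\rightsquigarrow V\otimes_K\bar K$, and in characteristic $0$ an endomorphism is semisimple exactly when its minimal polynomial is separable (squarefree), i.e. when it is diagonalizable over $\bar K$. Since $u^r$ stays invertible and semisimple after tensoring with $\bar K$, I may and do assume $K=\bar K$, so that semisimple means diagonalizable. Observe also that invertibility of $u$ forces every eigenvalue $\lambda$ of $u$ to be nonzero.

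Next I would invoke the Jordan--Chevalley decomposition $u=s+n$, with $s$ semisimple, $n$ nilpotent and $sn=ns$, and decompose $V=\bigoplus_\lambda V_\lambda$ into the generalized eigenspaces of $u$. On $V_\lambda$ one has $s=\lambda\,\Id$ and $u=\lambda\,\Id+n$ with $n$ nilpotent. Expanding the binomial gives
$$u^r-\lambda^r\,\Id = n\left(r\lambda^{r-1}\,\Id+\binom{r}{2}\lambda^{r-2}n+\cdots+n^{r-1}\right),$$
an operator which is nilpotent and commutes with $\lambda^r\,\Id=s^r|_{V_\lambda}$. By uniqueness of the Jordan decomposition, $\lambda^r\,\Id$ is the semisimple part of $u^r|_{V_\lambda}$ and the displayed operator is its nilpotent part.

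Finally, since $u^r$ is semisimple its nilpotent part vanishes, so the displayed operator is $0$. Here is the crux, and the place where both hypotheses enter: the factor $r\lambda^{r-1}\,\Id+\binom{r}{2}\lambda^{r-2}n+\cdots+n^{r-1}$ is invertible, because its leading term $r\lambda^{r-1}$ is a \emph{nonzero} scalar --- using $\lambda\neq 0$ (invertibility of $u$) together with $r\neq 0$ in $K$ (characteristic $0$) --- and a nonzero scalar operator plus a nilpotent one is invertible. Hence $n|_{V_\lambda}=0$, so $u$ acts as $\lambda\,\Id$ on each $V_\lambda$; thus $u$ is diagonalizable, i.e. semisimple. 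The only real difficulty is keeping track of where the two hypotheses are used: the examples of a nonzero nilpotent $u$ (with $\lambda=0$, so $u^r=0$ is semisimple) and of a prime characteristic dividing $r$ show that invertibility and characteristic $0$ are both genuinely needed.
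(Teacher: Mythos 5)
Your proof is correct, and it takes a genuinely different route from the paper's. Both arguments begin the same way, by reducing to $K=\bar K$; but where you invoke the Jordan--Chevalley decomposition and compute, on each generalized eigenspace of $u$, the nilpotent part of $u^r$ explicitly as $n\bigl(r\lambda^{r-1}\Id+\cdots+n^{r-1}\bigr)$ in order to conclude $n=0$, the paper instead decomposes $\bar V$ into the eigenspaces of $\bar u^r$ and, on the eigenspace where $\bar u^r=\lambda\,\Id$ with $\lambda\neq 0$, observes that $\bar u$ is annihilated by the polynomial $T^r-\lambda$, which is split with simple roots precisely because $\lambda\neq 0$ and the characteristic does not divide $r$; an endomorphism killed by a squarefree split polynomial is diagonalizable. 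The two mechanisms are dual ways of spending the same two hypotheses: in the paper they guarantee separability of $T^r-\lambda$ (via a primitive $r$-th root of unity and a nonzero $r$-th root of $\lambda$), in your version they guarantee invertibility of the cofactor $r\lambda^{r-1}\Id+\cdots$. The paper's argument is a line shorter; yours has the merit of isolating exactly where each hypothesis is used and of exhibiting the counterexamples ($u$ nilpotent nonzero, or $p\mid r$) that show both are needed --- a point the paper leaves implicit. Your reduction to $\bar K$ via invariance of the minimal polynomial under base change over a perfect field is also stated more carefully than in the paper, which passes to $\bar K$ without comment.
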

\begin{proof} Soit $\bar K$ une cloture alg\'ebrique de $K$. On pose $\bar V:= V\otimes\bar K$ et
$\bar u:=u\otimes\Id_{\bar K}\in \End_{\bar K}(\bar V)$.
Quitte \`a remplacer $\bar V$ par l'un des sous-espaces propres de $\bar{u}^r$, on peut
toujours supposer qu'il existe 
$\lambda\in\bar K\setminus\{0\}$ tel que $\bar{u}^r=\lambda\,\Id_{\bar V}$. Soient
$\zeta\in\bar K$ une racine primitive $r$-i\`eme de l'unit\'e et
$P:=\prod_{i\in\{1,\ldots,r\}}(T-\zeta^i\lambda)\in\bar K[T]$~; $P$ est scind\'e, \`a racines simples et 
$P(\bar u) \equiv 0$.
On conclut facilement.
\end{proof}

\section{D\'emonstration du th\'eor\`eme \ref{theo:principal}}

On commence par un r\'esultat g\'en\'eral.

\begin{prop}\label{prop:hasswitt}
Soit  $X$ une vari\'et\'e alg\'ebrique projective et lisse sur $K$, g\'eom\'etriquement int\`egre, de dimension $d=2m$.
On suppose 
qu'il existe une $2$-forme $\Omega$ 
telle qu'on ait $\textup{H}^0(X,\Omega_X^2)=K\cdot\Omega$ et $\Omega^{\wedge m}\neq 0$.
On suppose enfin que la conjecture de Tate et
la conjecture de semi-simplicit\'e sont vraies pour $X$.
On consid\`ere un mod\`ele entier
$f : \cX \to \Spec(\cO_K)$ de $X$ et un ensemble fini
$\cV$ de places non archim\'ediennes de $K$ tel que $f$ soit lisse au-dessus de l'ouvert
$\Spec(\cO_K[\cV^{-1}])$ de $\Spec(\cO_K)$.
Il existe alors un ensemble $\Sigma$ de places ultram\'etriques de $K$ de densit\'e $>0$ 
tel que pour tout
$v\in\Sigma\setminus\cV$ l'invariant de Hasse-Witt de $\cX_v$ soit non nul. On peut supposer la densit\'e de
$\Sigma$  \'egale
\`a 1 quitte \`a remplacer $K$ par une extension finie $L$ de $K$
convenable.
\end{prop}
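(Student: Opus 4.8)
Proposition 7.1 combines essentially every tool developed so far, so the plan is to assemble them in sequence. The hypotheses on $X$—projective, smooth, geometrically integral, of dimension $d=2m$, with $\textup{H}^0(X,\Omega_X^2)=K\cdot\Omega$, $\Omega^{\wedge m}\neq 0$, and the Tate and semi-simplicity conjectures—are precisely those required to invoke Lemma \ref{lemm:bogomolov} (the Ogus–Bogomolov–Zahrin result) together with Lemma \ref{lemm:hassewittversusscindage}(1) and Lemma \ref{lemm:scindageversusfrobenius}. First I would dispose of the degenerate case: if $\rang(\NS(X\otimes\bar K))=\dim_{\bQ_\ell}(\Het^2(X\otimes\bar K,\bQ_\ell))$, then the cycle class map (via the Tate conjecture) already forces all of $\Het^2(1)$ to be spanned by algebraic classes, which should make the argument trivial or force a direct verification; the interesting case is the strict inequality $\rang(\NS(X\otimes\bar K)) < \dim_{\bQ_\ell}(\Het^2(X\otimes\bar K,\bQ_\ell))$, where Lemma \ref{lemm:bogomolov} applies directly once I choose $\ell>2b_2$.

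The core of the argument runs as follows. I would fix an auxiliary prime $\ell > 2b_2$ with $\ell\neq p_v$ for the relevant places, and apply Lemma \ref{lemm:bogomolov} to obtain a set $\Sigma$ of ultrametric places of $K$ of density $>0$ such that for $v\in\Sigma$: the residue field $k_v$ is prime of characteristic $p_v\neq\ell$, the representation $\rho$ is unramified at $v$, and one of the eigenvalues of the geometric Frobenius $F_{v,\rho}^{-1}$ is an algebraic integer of $p_v$-adic valuation zero. The point is now to transport this $\ell$-adic information to crystalline information about the reduction $\cX_v$. Since $f$ is smooth over $\Spec(\cO_K[\cV^{-1}])$, for $v\notin\cV$ the fiber $\cX_v$ is projective and smooth over $k_v$, and by smooth base change the eigenvalues of $F_{v,\rho}^{-1}$ acting on $\Het^2(X\otimes\bar K,\bQ_\ell)$ coincide with those of $\Fabsv^{a_v}\times\Id$ acting on $\Het^2(\cX_v\otimes\bar k_v,\bQ_\ell)$, as recalled in the preamble to Lemma \ref{lemm:bogomolov}.

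The bridge from $\ell$-adic Frobenius eigenvalues to crystalline ones is the compatibility of Weil numbers: the characteristic polynomial of Frobenius on $\Het^2$ is independent of the chosen cohomology theory (this is where I invoke that the reversed characteristic polynomial has integer coefficients independent of $\ell\neq p_v$, by \cite{deligne:weil1}), so the eigenvalue of $p_v$-adic valuation zero supplied by Lemma \ref{lemm:bogomolov} is also an eigenvalue of $\Fcris^{a_v}$ on $\Hcris^2(\cX_v/W)\otimes_W K_0$. Applying Lemma \ref{lemm:scindageversusfrobenius} with $s=2$—after checking its hypotheses, namely that $\Hcris^\bullet(\cX_v/W)$ is torsion-free and the Hodge–de Rham spectral sequence degenerates at $E_1$, which for these symplectic-type varieties follows from the Mazur–Ogus framework already cited—yields a closed $2$-form $\Omega'\in\H^0(\cX_v,Z\Omega_{\cX_v}^2)$ with $C(\Omega')\neq 0$. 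Since $\textup{H}^0(\cX_v,\Omega_{\cX_v}^2)$ is one-dimensional (which I would need to control by restricting $\cV$ so that the Hodge numbers are preserved under reduction, again via degeneration and torsion-freeness), $\Omega'$ is a nonzero multiple of the symplectic form $\Omega$, so $C(\Omega)\neq 0$, and Lemma \ref{lemm:hassewittversusscindage}(1) concludes that the Hasse–Witt invariant of $\cX_v$ is nonzero. The density-one statement follows by replacing $K$ with the extension $L$ furnished by the final assertion of Lemma \ref{lemm:bogomolov}.

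The main obstacle I anticipate is verifying the hypotheses of Lemma \ref{lemm:scindageversusfrobenius} for the reductions $\cX_v$ uniformly over a positive-density set of places: one must ensure that, after enlarging $\cV$ by a finite set, the crystalline cohomology $\Hcris^\bullet(\cX_v/W)$ is torsion-free and the Hodge spectral sequence degenerates, and that the Hodge number $h^{0,2}$ remains equal to $1$ in characteristic $p_v$. For symplectic-type varieties in characteristic zero these properties hold, and the standard spreading-out argument propagates them to all but finitely many reductions; making this precise—and ensuring the chosen closed $2$-form with nonvanishing Cartier image is genuinely the symplectic form—is the delicate technical heart of the proof, whereas the Chebotarev and Frobenius-eigenvalue bookkeeping has already been packaged into Lemma \ref{lemm:bogomolov}.
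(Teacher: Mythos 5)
Your proposal follows essentially the same route as the paper: enlarge $\cV$ so that $\Hcris^{*}(\cX_v/W(k_v))$ is torsion-free and the Hodge spectral sequence degenerates at $E_1$, choose $\ell>2b_2$, apply Lemma \ref{lemm:bogomolov} to get a positive-density set of places with a Frobenius eigenvalue of $p_v$-adic valuation zero, transfer this to $\Fcris$ via smooth base change and the $\ell$-independence of \cite{deligne:weil1}, and conclude with Lemma \ref{lemm:scindageversusfrobenius} (for $s=2$) and Lemma \ref{lemm:hassewittversusscindage}(1). One small remark: the case $\rang(\NS(X\otimes\bar K))=\dim_{\bQ_\ell}\Het^2(X\otimes\bar K,\bQ_\ell)$, which you set aside as ``trivial,'' is in fact vacuous here --- the hypothesis $\H^0(X,\Omega_X^2)=K\cdot\Omega\neq 0$ forces $\rang(\NS)\le h^{1,1}<b_2$, so Lemma \ref{lemm:bogomolov} always applies (the paper leaves this implicit as well).
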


\begin{proof}
On peut toujours supposer, quitte \`a \'elargir $\cV$, que pour toute place finie $v\not\in\cV$
la cohomologie cristalline $\Hcris^{*}(\cX_v/W(k_v))$ est sans torsion sur $W(k_v)$ (\cite[Proposition 6.6.1]{joshi01})
et que
la suite spectrale de Hodge
$E_1^{ij}=\H^j(\cX_v,\Omega_{\cX_v}^i)\Longrightarrow \Hdr^{i+j}(\cX_v/k_v)$
d\'eg\'en\`ere en $E_1$ (\cite{deligneillusie87}).

On fixe un nombre premier $\ell > 2 \dim_\bQ(\Hb(X\otimes\bC,\bQ)$.
On note $\rho$ la repr\'esentation $\ell$-adique $\Gal(\bar K/K) \to \Gl(\Het^2(X\otimes\bar K,\bQ_\ell))$.
On sait d'apr\`es le lemme \ref{lemm:bogomolov}
qu'il
existe un ensemble $\Sigma$ de places ultram\'etriques de $K$ de densit\'e $>0$ 
tel que, si $v\in\Sigma$ alors 
\begin{enumerate}
\item[$\bullet$] $k_v$ est un corps premier de caract\'eristique $p_v\neq\ell$,
\item[$\bullet$] $\rho$ est non ramif\'ee en $v$,
\item[$\bullet$] les valeurs propres du
\og Frobenius g\'eom\'etrique\fg~$F_{v,\rho}^{-1}$ sont des entiers alg\'ebriques et l'une d'elles
a une valuation $p_v$-adique
nulle (o\`u la valuation est normalis\'ee par la condition que la valuation $p_v$-adique de $p_v$ est 1). 
\end{enumerate}

Soit $v\in \Sigma\setminus\cV$. On d\'eduit alors du lemme \ref{lemm:scindageversusfrobenius} qu'il existe 
$\Omega_v\in \H^0(\cX_v,Z\Omega_{\cX_v}^2)$ telle que
$C(\Omega_v)\neq 0$ et enfin du lemme \ref{lemm:hassewittversusscindage} que l'invariant de Hasse-Witt de 
$\cX_v$ est non nul.
\end{proof}

On est maintenant en mesure de d\'emontrer le th\'eor\`eme \ref{theo:principal}.

\begin{proof}[D\'emonstration du th\'eor\`eme \ref{theo:principal}]
On remarque que sous les hypoth\`eses du th\'eor\`eme, on a toujours $b_2(X)\ge 4$. On sait alors d'apr\`es
\cite[Theorem 1.6.1]{andre96} que la conjecture de Tate est vraie pour $X$. On d\'eduit maintenant le r\'esultat
cherch\'e de la proposition \ref{prop:hasswitt} et du th\'eor\`eme \ref{theo:semisimplicite}.

\end{proof}


\providecommand{\bysame}{\leavevmode ---\ }
\providecommand{\og}{``}
\providecommand{\fg}{''}
\providecommand{\smfandname}{et}
\providecommand{\smfedsname}{\'eds.}
\providecommand{\smfedname}{\'ed.}
\providecommand{\smfmastersthesisname}{M\'emoire}
\providecommand{\smfphdthesisname}{Th\`ese}

\end{document}